\let\oldhline\hline
\let\TSp\thinspace
\def\DSp{\thinspace\thinspace}
\def\lbr{\raise 1pt\hbox{[}}
\def\rbr{\raise 1pt\hbox{]}}
\def\gobble#1{}
\def\hline{\multispan\numcolumns\hrulefill\cr}
\def\torframe#1{\vtop{\vbox{\hrule\hbox{\vrule\strut #1\vrule}}\hrule}}
\def\dblhline{\hline height 0.16667em\gobble&\emptyline\cr\hline}
\newbox\tablebox
\def\tCalA{\widetilde\CalA}
\def\slf{\mathfrak{sl}}
\def\KhoHo{{\tt KhoHo}\space}
\def\tbb{\overline{tb}}
\def\hw{\protect\operatorname{hw}}
\def\ohw{\protect\operatorname{ohw}}
\def\thw{\widetilde{\protect\operatorname{hw}}}
\def\tohw{\widetilde{\protect\operatorname{ohw}}}
\let\lra\longrightarrow
\def\odd{\mathrm{odd}}
\theoremstyle{OVplain}
\newtheorem{knightmove}[thm]{Knight-Move Conjecture}
\theoremstyle{OVdefinition}
\newtheorem{example}[thm]{Example}
\begin{document}
\title[Khovanov homology theories and their applications]
{Khovanov homology theories\\ and their applications}
\author[A.~Shumakovitch]{Alexander Shumakovitch}
\address{Department of Mathematics, The George Washington University,
Monroe Hall 2115 G St. NW, Washington, DC 20052, U.S.A.}
\email{Shurik@gwu.edu}
\thanks{The author is partially supported by NSF grant DMS--0707526}
\begin{abstract}
This is an expository paper discussing various versions of Khovanov
homology theories, interrelations between them, their properties, and
their applications to other areas of knot theory and low-dimensional topology.
\end{abstract}
\dedicatory{To my teacher and advisor, Oleg Yanovich Viro,\\
on the occasion of his $60^{th}$ birthday}
\maketitle

\stepcounter{footnote}

\section{Introduction}
Khovanov homology is a special case of {\em categorification}, a novel
approach to construction of knot (or link) invariants that is being actively
developed over
the last decade after a seminal paper~\cite{Kh-Jones} by Mikhail Khovanov. The
idea of categorification is to replace a known polynomial knot (or link)
invariant with a family of chain complexes, such that the coefficients of the
original polynomial are the Euler characteristics of these complexes. Although
the chain complexes themselves depend heavily on a diagram that represents the
link, their homology depend on the isotopy class of the link
only. Khovanov homology categorifies the Jones polynomial~\cite{Jones}.

More specifically, let $L$ be an oriented link in $\R^3$ represented by a
planar diagram $D$ and let $J_L(q)$ be a version of the Jones polynomial of
$L$ that satisfies the following identities (called the {\em Jones skein
relation} and {\em normalization}):
\begin{equation}\label{eq:Jones-skein}
-q^{-2}J_{\includegraphics[scale=0.45]{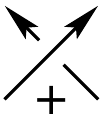}}(q)
+q^2J_{\includegraphics[scale=0.45]{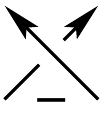}}(q)
=(q-1/q)J_{\includegraphics[scale=0.45]{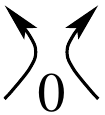}}(q);
\qquad
J_{\includegraphics[scale=0.45]{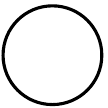}}(q)=q+1/q.
\end{equation}
The skein relation should be understood as relating the Jones polynomials of
three links whose planar diagrams are identical everywhere except in a small
disk, where they are different as depicted in~\eqref{eq:Jones-skein}. The
normalization fixes the value of the Jones polynomial on the trivial knot.
$J_L(q)$ is a Laurent polynomial in $q$ for every link $L$ and is
completely determined by its skein relation and normalization.

In~\cite{Kh-Jones} Mikhail Khovanov assigned to $D$ a family of Abelian groups
$\CalH^{i,j}(L)$ whose isomorphism classes depend on the isotopy class of $L$
only. These groups are defined as homology groups of an appropriate (graded)
chain complex $\CalC^{i,j}(D)$ with integer coefficients.
Groups $\CalH^{i,j}(L)$ are
nontrivial for finitely many values of the pair $(i,j)$ only. The gist of the
categorification is that the graded Euler characteristic of the Khovanov chain
complex equals $J_L(q)$:
\begin{equation}\label{eq:KhEuler-Jones}
J_L(q)=\sum_{i,j}(-1)^iq^jh^{i,j}(L),
\end{equation}
where $h^{i,j}(L)=\rk(\CalH^{i,j}(L))$, the Betti numbers of $\CalH$.
The reader is referred to Section~\ref{sec:Khovanov} for detailed
treatment (see also~\cite{BN-first,Kh-Jones}).

In our paper we also make use of another version of the Jones polynomial,
denoted $\tJ_L(q)$, that satisfies the same skein
relation~\eqref{eq:Jones-skein} but is normalized to equal~$1$ on the trivial
knot. For the sake of completeness, we also list the skein relation for the
original Jones polynomial, $V_L(t)$, from~\cite{Jones}:
\begin{equation}\label{eq:Jones-orig-skein}
t^{-1}V_{\includegraphics[scale=0.45]{pos_Xing-black}}(t)
-tV_{\includegraphics[scale=0.45]{neg_Xing-black}}(t)
=(t^{1/2}-t^{-1/2})V_{\includegraphics[scale=0.45]{smooth_Xing-black}}(t);
\qquad
V_{\includegraphics[scale=0.45]{circle-black}}(t)=1.
\end{equation}
We note that $J_L(q)\in\Z[q,q^{-1}]$ while $V_L(t)\in\Z[t^{1/2},t^{-1/2}]$.
In fact, the terms of $V_L(t)$ have half-integer (resp. integer) exponents if
$L$ has even (resp. odd) number of components. This is one of the main
motivations for our
convention~\eqref{eq:Jones-skein} to be different
from~\eqref{eq:Jones-orig-skein}. We also want to ensure that the Jones
polynomial of the trivial link has only positive coefficients. The different
versions of the Jones polynomial are related as follows:
\begin{equation}\label{eq:Jones-Jones}
J_L(q)=(q+1/q)\tJ_L(q),\qquad \tJ_L(-t^{1/2})=V_L(t),\qquad
V_L(q^2)=\tJ_L(q)
\end{equation}

Another way to look at the Khovanov's identity~\eqref{eq:KhEuler-Jones} is 
via the {\em Poincar\'e polynomial} of the Khovanov homology:
\begin{equation}\label{eq:Kh-Poincare}
Kh_L(t,q)=\sum_{i,j}t^iq^jh^{i,j}(L).
\end{equation}
With this notation, we get
\begin{equation}\label{eq:KhPol-Jones}
J_L(q)=Kh_L(-1,q).
\end{equation}

\begin{figure}
\centerline{$\vcenter{\hbox{\includegraphics[scale=0.66]{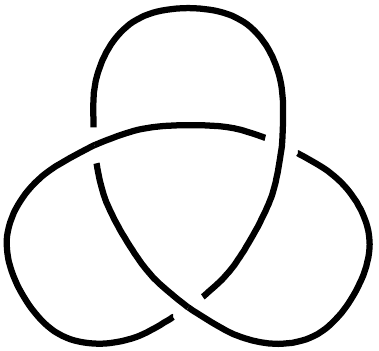}}}
\qquad\qquad\vcenter{\include{trefoil-Kh}}$}
\caption{Right trefoil and its Khovanov homology}\label{fig:trefoil-Kh}
\end{figure}
\begin{example}\label{ex:trefoil}
Consider the right trefoil $K$. Its non-zero homology groups are tabulated
in Figure~\ref{fig:trefoil-Kh}, where the $i$-grading is represented
horizontally and the $j$-grading vertically. The homology is non-trivial for
odd $j$-grading only and, hence, even rows are not shown in the table.
A table entry of $\mathbf1$ or $\mathbf{1_2}$ means that the corresponding group
is $\Z$ or $\Z_2$, respectively (one can find a more interesting example in
Figure~\ref{fig:diff-thickness}). In general, an entry of the form
$\mathbf{a,b_2}$ would correspond to the group $\Z^a\oplus\Z_2^b$.
For the trefoil $K$, we have that $\CalH^{0,1}(K)\simeq\CalH^{0,3}(K)\simeq
\CalH^{2,5}(K)\simeq\CalH^{3,9}(K)\simeq\Z$ and $\CalH^{3,7}(K)\simeq\Z_2$.
Therefore, $Kh_K(t,q)=q+q^3+t^2q^5+t^3q^9$. On the other hand, the Jones
polynomial of $K$ equals $V_K(t)=t+t^3-t^4$. Relation~\eqref{eq:Jones-Jones}
implies that $J_K(q)=(q+1/q)(q^2+q^6-q^8)=q+q^3+q^5-q^9=Kh_K(-1,q)$.
\end{example}

Without going into details, we note that the initial categorification of the
Jones polynomial by Khovanov was followed with a flurry of activity.
Categorifications of the colored Jones
polynomial~\cite{Kh-colored,Beliakova-colored} and skein $\slf(3)$
polynomial~\cite{Kh-sl3} were based on the original Khovanov's construction.
Matrix factorization technique was used to categorify the 
$\slf(n)$ skein polynomials~\cite{KR-sln}, HOMFLY-PT
polynomial~\cite{KR-HOMFLY}, Kauffman polynomial~\cite{KR-SO2N}, and, more
recently, colored $\slf(n)$ polynomials~\cite{Wu-slN,Yonezawa}.
Ozsv\'ath, Szab\'o and, independently, Rasmussen used a completely different
method of Floer homology to categorify the Alexander
polynomial~\cite{OS-knots,Jake-Floer}. Ideas of categorification were
successfully applied to tangles, virtual links, skein modules, and
polynomial invariants of graphs.

One of the most important recent development in the Khovanov homology theory
is the introduction in~2007 of its {\em odd} version by Ozsv\'ath, Rasmussen
and Szab\'o~\cite{Khovanov-odd}. The odd Khovanov homology equals the
original (even) one modulo $2$ and, in particular, categorifies the same Jones
polynomial. On the other hand, the odd and even homology theories often have
drastically different properties (see Sections~\ref{sec:Kh-odd}
and~\ref{sec:Kh-prop} for details). The odd Khovanov homology appears to be
one of the connecting links between Khovanov and Heegaard-Floer homology
theories~\cite{OS-spectral}.

The importance of the Khovanov homology became apparent after a seminal result
by Jacob Rasmussen~\cite{Jake-Milnor}, who used the Khovanov chain complex to
give the first purely combinatorial proof of the Milnor conjecture. This
conjecture states that the $4$-dimensional (slice) genus (and, hence, the
genus) of a $(p,q)$-torus knot equals $\frac{(p-1)(q-1)}2$. It was originally
proved by Kronheimer and Mrowka~\cite{Kronheimer-Mrowka} using the gauge
theory in 1993.

There are numerous other applications of Khovanov homology theories.
They can be used to provide combinatorial proofs of the Slice-Bennequin
Inequality and give upper bounds on the Thurston-Bennequin number of
Legendrian links, detect quasi-alternating links and find topologically
locally-flatly slice knots that are not smoothly slice. We refer the reader to
Section~\ref{sec:Kh-appl} for details.

The goal of this paper is to give an overview of the current state of research
in Khovanov homology. The exposition is mostly self-contained and no advanced
knowledge of the subject is required from the reader. We intentionally limit
the scope of our paper to the categorifications of the Jones polynomial only,
so as to keep its size under control. The reader is referred to other
expository papers on the subject~\cite{Kh-Asaeda,Kh-ICM,Jake-comparison} to
learn more about the interrelations between different types of
categorifications.

We also pay significant attention to experimental aspects of the Khovanov
homology.
As is often the case with new theories, the initial discovery is led by
experiments. It is especially true for Khovanov homology, since it can be
computed by hands for a very limited family of knots only. At the moment,
there are two programs~\cite{katlas-program,Sh-KhoHo} that compute Khovanov
homology. The first one was written by Dror Bar-Natan and his student Jeremy
Green in~2005 and implements the methods from~\cite{BN-fast}. It works
significantly faster for knots with sufficiently many crossings (say, more
than $15$) than the older program \KhoHo by the author. On the other hand,
\KhoHo can compute all the versions of the Khovanov homology that are
mentioned in this paper. It is currently the only program that can deal with
the odd Khovanov homology. Most of the experimental results that are referred
to in this paper were obtained with \KhoHo.

This paper is organized as follows. In Section~\ref{sec:Khovanov} we give a
quick overview of constructions involved in the definition of various
Khovanov homology theories. We compare these theories with each other and 
list their basic properties in Section~\ref{sec:Kh-prop}. 
Section~\ref{sec:Kh-appl} is devoted to some of the more important
applications of the Khovanov homology to other areas of low-dimensional
topology.

This paper was originally presented at the Marcus Wallenberg
Symposium on Perspectives in Analysis, Geometry, and Topology at Stockholm
University in May of 2008. The author would like to thank all the organizers
of the Symposium for a very successful and productive meeting. He extends his
special thanks to Ilia Itenberg, Burglind J\"oricke, and Mikael Passare,
the editors of these Proceedings, for their patience with the author.
The author is indebted to Mikhail Khovanov for many advises and enlightening
discussions during the work on this paper. Finally, the author would like to
express his deepest gratitude to Oleg Yanovich Viro for introducing him to the
wonderful world of topology $20$ years ago and for continuing to be his guide
in this world ever since.

\section{Definition of the Khovanov homology}\label{sec:Khovanov}

In this section we give a brief outline of various Khovanov homology theories
starting with the original Khovanov's construction. Our setting is slightly
more general than the one in the Introduction as we allow different
coefficient rings, not only $\Z$.

\subsection{Algebraic preliminaries}
Let $R$ be a commutative ring with unity. In this paper, we are mainly
interested in the cases when $R=\Z$, $\Q$, or $\Z_2$.

\begin{defin}\label{def:graded-module}
A {\sl $\Z$-graded} (or simply {\sl graded}) $R$-module $M$ is an $R$-module
decomposed into a direct sum $M=\bigoplus_{j\in\Z}M_j$, where each $M_j$ is an
$R$-module itself. The summands $M_j$ are called {\em homogeneous components}
of $M$ and elements of $M_j$ are called the {\em homogeneous elements of
degree $j$}.
\end{defin}

\begin{defin}\label{def:graded-dim}
Let $M=\bigoplus_{j\in\Z}M_j$ be a graded free $R$-module. The {\em graded
dimension} of $M$ is the power series $\dim_q(M)=\sum_{j\in\Z}q^j\dim(M_j)$ in
variable $q$. If $k\in\Z$, the {\em shifted module} $M\{k\}$ is defined as
having homogeneous components $M\{k\}_j=M_{j-k}$.
\end{defin}

\begin{defin}\label{def:graded-map}
Let $M$ and $N$ be two graded $R$-modules. A map $\Gf:M\to N$ is said to be
{\em graded of degree $k$} if $\Gf(M_j)\subset N_{j+k}$ for each $j\in\Z$.
\end{defin}

\begin{attn}\label{attn:grading-props}
It is an easy exercise to check that $\dim_q(M\{k\})=q^k\dim_q(M)$,
$\dim_q(M\oplus N)=\dim_q(M)+\dim_q(N)$, and
$\dim_q(M\otimes_R N)=\dim_q(M)\dim_q(N)$, where $M$ and $N$ are graded
$R$-modules. Moreover, if $\Gf:M\to N$ is a graded map of degree $k'$, then
the {\sl shifted map} $\Gf:M\to N\{k\}$ is graded of degree $k'+k$. We
slightly abuse the notation here by denoting the shifted map in the same way
as the map itself.
\end{attn}

\begin{defin}\label{def:graded-Euler}
Let $(\CalC,d)=\cdots\lra\CalC^{i-1}
\stackrel{d^{i{-}1}}{\lra}\CalC^i
\stackrel{d^i}{\lra}\CalC^{i+1}\lra\cdots$ be a
(co)chain complex of graded free $R$-modules with graded
differentials $d^i$ having
degree $0$ for all $i\in\Z$. Then the {\sl graded Euler characteristic}
of $\CalC$ is defined as $\chi_q(\CalC)=\sum_{i\in\Z}(-1)^i\dim_q(\CalC^i)$.
\end{defin}

\begin{rem}
One can think of a graded (co)chain complex of $R$-modules as a {\em bigraded
$R$-module} where the homogeneous components are indexed by pairs of numbers
$(i,j)\in\Z^2$.
\end{rem}

Let $A=R[X]/X^2$ be the algebra of truncated polynomials. As an $R$-module,
$A$ is freely generated by $1$ and $X$. We put grading on $A$ by specifying
that $\deg(1)=1$ and $\deg(X)=-1\footnote{We follow the original grading
convention from~\cite{Kh-Jones} and~\cite{BN-first} here. It is different
by a sign from the one in~\cite{Kh-Asaeda}.}$. In other words, $A\simeq
R\{1\}\oplus R\{-1\}$ and $\dim_q(A)=q+q^{-1}$. At the same time, $A$ is a
(graded) commutative algebra with the unity $1$ and multiplication $m:A\otimes
A\to A$ given by \begin{equation}\label{eq:A-mult}
m(1\otimes 1)=1,\qquad m(1\otimes X)=m(X\otimes1)=X,\qquad m(X\otimes X)=0.
\end{equation}

$A$ can also be equipped with a coalgebra structure with comultiplication
$\Delta:A\to A\otimes A$ and counit $\Ge:A\to R$ defined as
\begin{align}\label{eq:A-comult}
\GD(1)&=1\otimes X+X\otimes 1,& \GD(X)&=X\otimes X;\\
\Ge(1)&=0,&\Ge(X)&=1.
\end{align}
The comultiplication $\GD$ is coassociative and cocommutative and satisfies
\begin{align}
(m\otimes\id_A)\circ(\id_A\otimes\GD)&=\GD\circ m\\
(\Ge\otimes\id_A)\circ\GD&=\id_A
\end{align}
Together with the unit map $\Gi:R\to A$ given by $\Gi(1)=1$, this makes $A$
into a commutative Frobenius algebra over $R$~\cite{Kh-Frobenius}.

It follows directly from the definitions that $\Gi$, $\Ge$, $m$, and $\GD$ are
graded maps with
\begin{equation}
\label{eq:cobord-grading}
\deg(\Gi)=\deg(\Ge)=1\quad\hbox{ and }\quad \deg(m)=\deg(\GD)=-1.
\end{equation}

\subsection{Khovanov chain complex}\label{sec:Kh-complex}
Let $L$ be an oriented link and $D$ its planar diagram. We assign a number
$\pm1$, called {\em sign}, to every crossing of $D$ according to the rule
depicted in Figure~\ref{fig:crossing-signs}. The sum of these signs over all
the crossings of $D$ is called the {\em writhe number} of $D$ and is denoted
by $w(D)$.

\begin{figure}
\captionindent 0.35\captionindent
\begin{minipage}{1.8in}
\centerline{\input{Xing_signs.pspdftex}}
\caption{Positive and negative crossings}
\label{fig:crossing-signs}
\end{minipage}
\hfill
\begin{minipage}{3.1in}
\centerline{\input{markers.pspdftex}}
\caption{Positive and negative markers and the corresponding resolutions of a
diagram.}
\label{fig:markers}
\end{minipage}
\end{figure}

Every crossing of $D$ can be {\em resolved} in two different ways according to
a choice of a {\em marker}, which can be either {\em positive} or {\em
negative}, at this crossing (see Figure~\ref{fig:markers}). A collection of
markers chosen at every crossing of a diagram $D$ is called a {\em (Kauffman)
state} of $D$. For a diagram with $n$ crossings, there are, obviously, $2^n$
different states. Denote by $\Gs(s)$ the difference between the numbers of
positive and negative markers in a given state $s$. Define
\begin{equation}\label{eq:state-ij}
i(s)=\frac{w(D)-\Gs(s)}2,\qquad j(s)=\frac{3w(D)-\Gs(s)}2.
\end{equation}
Since both $w(D)$ and $\Gs(s)$ are congruent to $n$ modulo $2$, $i(s)$ and
$j(s)$ are always integer. For a given state $s$, the result of
the resolution of $D$ at each crossing according to $s$ is a family $D_s$ of
disjointly embedded circles. Denote the number of these circles by $|D_s|$.

For each state $s$ of $D$, let $\CalA(s)=A^{\otimes|D_s|}\{j(s)\}$. One
should understand this construction as assigning a copy of algebra $A$ to each
circle from $D_s$, taking the tensor product of all of these copies, and
shifting the grading of the result by $j(s)$. By construction, $\CalA(s)$ is
a graded free $R$-module of graded dimension 
$\dim_q(\CalA(s))=q^{j(s)}(q+q^{-1})^{|D_s|}$. Let
$\CalC^i(D)=\bigoplus_{i(s)=i}\CalA(s)$ for each $i\in\Z$. In order to make
$\CalC(D)$ into a graded complex, we need to define a (graded) differential
$d^i:\CalC^i(D)\to\CalC^{i+1}(D)$ of degree $0$. But even before this
differential is defined, the (graded) Euler characteristic of $\CalC(D)$
makes sense.

\begin{lem}\label{lem:Euler-Jones}
The graded Euler characteristic of $\CalC(D)$ equals the Jones polynomial of
the link $L$. That is, $\chi_q(\CalC(D))=J_L(q)$.
\end{lem}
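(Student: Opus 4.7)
The plan is to expand $\chi_q(\CalC(D))$ as a sum over Kauffman states and then verify that the resulting expression satisfies the Jones skein relation~\eqref{eq:Jones-skein} together with the unknot normalization. Unpacking Definition~\ref{def:graded-Euler} and the decomposition $\CalC^i(D)=\bigoplus_{i(s)=i}\CalA(s)$ gives
$$\chi_q(\CalC(D)) = \sum_i (-1)^i \dim_q\!\bigl(\CalC^i(D)\bigr) = \sum_s (-1)^{i(s)} q^{j(s)} (q+q^{-1})^{|D_s|},$$
where the sum runs over all Kauffman states $s$ of $D$. Since \eqref{eq:state-ij} implies $j(s)-i(s)=w(D)$ independently of $s$, this simplifies to
$$\chi_q(\CalC(D)) = q^{w(D)} \sum_s (-q)^{i(s)} (q+q^{-1})^{|D_s|}.$$

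To identify the right-hand side with $J_L(q)$, I would induct on the crossing number $n$ of $D$. For the base case $n=0$, the diagram is a disjoint union of $k$ circles, the unique state has $w=i=j=0$ and $|D_s|=k$, so the sum evaluates to $(q+q^{-1})^k$; a short skein computation (e.g.\ connecting two components of an unlink by a kink) shows that this agrees with $J_L(q)$ for the $k$-component unlink. For the inductive step, fix a crossing $c$ of $D$ and partition the state sums for each diagram in the Jones skein triple $(D_+,D_-,D_0)$ according to the marker chosen at $c$. The states of $D_\pm$ whose marker at $c$ yields the oriented resolution are in bijection with the states of $D_0$, while the opposite marker produces the non-oriented smoothing $D_\infty$. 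Tracking the changes in $\sigma, i, j$, and $|D_s|$ under these bijections --- together with the writhe shift $w(D_\pm)=w(D_0)\pm1$ --- one verifies that
$$-q^{-2}\chi_q(\CalC(D_+)) + q^2\chi_q(\CalC(D_-)) = (q-q^{-1})\chi_q(\CalC(D_0)),$$
which is exactly~\eqref{eq:Jones-skein}.

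The main obstacle will be the sign-and-grading bookkeeping in this last step. Specifically, one must check that the $D_\infty$ contributions to $-q^{-2}\chi_q(\CalC(D_+))$ and $q^2\chi_q(\CalC(D_-))$ cancel pairwise --- a cancellation that hinges delicately on the conventions for which marker at $c$ gives the oriented smoothing at a positive versus a negative crossing, together with the $\pm1$ writhe shifts --- while the surviving $D_0$ contributions assemble with precisely the monomial shifts $q^{\pm2}$ needed to reproduce the coefficient $q-q^{-1}$ on the right-hand side. This verification is routine but notationally heavy; once complete, the induction closes and we conclude $\chi_q(\CalC(D)) = J_L(q)$.
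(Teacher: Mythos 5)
Your state-sum expansion of $\chi_q(\CalC(D))$ is exactly the paper's first step, and the skein identity you propose to verify is indeed true at the level of diagrams: it is the standard Kauffman-bracket computation in disguise, and the $D_\infty$ contributions do cancel as you predict, so that part is only bookkeeping. The genuine gap is in the logic of your induction. The skein triple $(D_+,D_-,D_0)$ contains only one diagram, $D_0$, with fewer crossings than $D$; the remaining diagram ($D_-$ if $D=D_+$, or $D_+$ if $D=D_-$) has the \emph{same} crossing number, so the inductive hypothesis says nothing about it. From the verified identity and $\chi_q(\CalC(D_0))=J_{L_0}(q)$ you only obtain
$-q^{-2}\bigl(\chi_q(\CalC(D_+))-J_{L_+}(q)\bigr)+q^{2}\bigl(\chi_q(\CalC(D_-))-J_{L_-}(q)\bigr)=0$,
i.e.\ the ``error'' at $D_+$ is a fixed multiple of the ``error'' at $D_-$; nothing forces either to vanish. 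The principle ``skein relation plus normalization determines $J_L$'' is a uniqueness statement about \emph{link invariants}, and at this point $\chi_q(\CalC(D))$ is only known to be a function of the diagram, so it cannot be invoked. Thus the induction as stated does not close.

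To close it you would need, at each crossing number, a base case in every crossing-change class (e.g.\ descending diagrams of unlinks), and there is no direct evaluation of the state sum on such diagrams short of proving its invariance under Reidemeister moves --- which is precisely Kauffman's theorem. This is what the paper does instead: after the same state-sum expansion it substitutes $q=-A^{-2}$, recognizes $\chi_q(\CalC(D))=(-A^2-A^{-2})\langle L\rangle_N$, where $\langle L\rangle_N=(-A)^{-3w(D)}\langle D\rangle$ is the writhe-normalized Kauffman bracket, and quotes Kauffman's result \cite{Kauffman-bracket} that this is a link invariant equal to the Jones polynomial, concluding via \eqref{eq:Jones-Jones}. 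So either cite that theorem as the paper does, or restructure your argument as: (i) the state sum satisfies the unoriented bracket recursion (immediate from splitting states at one crossing); (ii) it is invariant under Reidemeister moves; (iii) the writhe prefactor converts that recursion into \eqref{eq:Jones-skein}. Step (ii) is the content your current proposal silently skips.
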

\begin{proof}
{\abovedisplayskip-\baselineskip
\[\begin{split}
\chi_q(\CalC(D))&=\sum_{i\in\Z}(-1)^i\dim_q(\CalC^i(D))\\
&=\sum_{i\in\Z}(-1)^i\sum_{i(s)=i}\dim_q(\CalA(s))\\
&=\sum_s(-1)^{i(s)}q^{j(s)}(q+q^{-1})^{|D_s|}\\
&=\sum_s(-1)^{\frac{w(D)-\Gs(s)}2}q^{\frac{3w(D)-\Gs(s)}2}(q+q^{-1})^{|D_s|}.
\end{split}\]}

Let us forget for a moment that $A$ denotes an algebra and (temporarily) use
this letter for a variable. Substituting $(-A^{-2})$ instead of $q$ and
noticing that $w(D)\equiv\Gs(s)\pmod2$, we arrive at
\begin{equation*}
\chi_q(\CalC(D))=(-A)^{-3w(D)}\sum_sA^{\Gs(s)}(-A^2{-}A^{-2})^{|D_s|}
=(-A^2{-}A^{-2})\langle L\rangle_N,
\end{equation*}
where $\langle L\rangle_N$ is the normalized Kauffman
bracket polynomial of $L$ (see~\cite{Kauffman-bracket} for details). The
normalized bracket polynomial of a link is related to the bracket polynomial
of its diagram as $\langle L\rangle_N=(-A)^{-3w(D)}\langle D\rangle$. Kauffman
proved in~\cite{Kauffman-bracket} that $\langle L\rangle_N$ equals the Jones
polynomial $V_L(t)$ of $L$ after substituting $t^{-1/4}$ instead of $A$. The
relation~\eqref{eq:Jones-Jones} between $V_L(t)$ and $J_L(q)$ completes our
proof.
\end{proof}

\begin{figure}
\centerline{\vbox{\halign{#\hfill\cr\hskip-0.5em\input{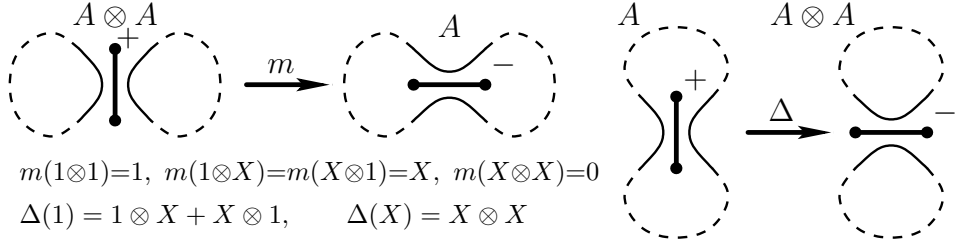}\cr
\noalign{\vskip -1.2\baselineskip}
\hbox{\vbox to0pt{\vss\halign{#\hfill\cr
$m(1{\otimes}1){=}1$,\enspace $m(1{\otimes}X){=}m(X{\otimes}1){=}X$,\enspace
$m(X{\otimes}X){=}0$\cr
\noalign{\medskip}
$\GD(1)=1\otimes X+X\otimes 1,\qquad\GD(X)=X\otimes X$\cr
\noalign{\smallskip}}}}\cr
}}}
\caption{Diagram resolutions corresponding to adjacent states and 
maps between the algebras assigned to the circles}\label{fig:res-change}
\end{figure}

Let $s_+$ and $s_-$ be two states of $D$ that differ at a single
crossing, where $s_+$ has a positive marker while $s_-$ has a negative one.
We call two such states {\em adjacent}. In this case, $\Gs(s_-)=\Gs(s_+)-2$
and, consequently, $i(s_-)=i(s_+)+1$ and $j(s_-)=j(s_+)+1$. Consider now the
resolutions of $D$ corresponding to $s_+$ and $s_-$. One can
readily see that $D_{s_-}$ is obtained from $D_{s_+}$ by either merging two
circles into one or splitting one circle into two (see
Figure~\ref{fig:res-change}). All the circles that do not pass through the
crossing at which $s_+$ and $s_-$ differ, remain unchanged. We define 
$d_{s_+:s_-}:\CalA(s_+)\to\CalA(s_-)$ as either $m\otimes\id$ or
$\GD\otimes\id$ depending on whether the circles merge on split. Here, the
multiplication or comultiplication is performed on the copies of $A$ that are
assigned to the affected circles, as on Figure~\ref{fig:res-change}, while
$d_{s_+:s_-}$ acts as identity on all the $A$'s corresponding to the
unaffected ones. The difference in grading shift between $\CalA(s_+)$ and
$\CalA(s_-)$ and \eqref{eq:cobord-grading} 
ensure that $\deg(d_{s_+:s_-})=0$ by~\ref{attn:grading-props}.

We need one more ingredient in order to finish the definition of the
differential on $\CalC(D$), namely, an ordering of the crossings of $D$. 
For an adjacent pair of states $(s_+,s_-)$, define $\Gx(s_+,s_-)$ to be the 
number of the {\em negative} markers in $s_+$ (or $s_-$) that appear in the
ordering of the crossings {\em after} the crossing at which $s_+$ and $s_-$
differ. Finally, let $d^i=\sum_{(s_+,s_-)}(-1)^{\Gx(s_+,s_-)}d_{s_+:s_-}$,
where $(s_+,s_-)$ runs over all adjacent pairs of states with $i(s_+)=i$.
It is straightforward to verify~\cite{Kh-Jones} that $d^{i+1}\circ d^i=0$ and,
hence, $d:\CalC(D)\to\CalC(D)$ is indeed a differential.

\begin{defin}[Khovanov,~\cite{Kh-Jones}]\label{def:Khovanov}
The resulting (co)chain complex
$\CalC(D)=\cdots\lra\CalC^{i-1}(D)
\stackrel{d^{i{-}1}}{\lra}\CalC^i(D)
\stackrel{d^i}{\lra}\CalC^{i+1}(D)\lra\cdots$
is called the {\em Khovanov chain complex} of the diagram $D$.
The homology of $\CalC(D)$ with respect to $d$ is called the {\em Khovanov
homology} of $L$ and is denoted by $\CalH(L)$. We write $\CalC(D;R)$ and
$\CalH(L;R)$ if we want to emphasize the ring of coefficients that we work
with. If $R$ is omitted from the notation, integer coefficients are assumed.
\end{defin}

\begin{thm}[Khovanov,~\cite{Kh-Jones}, see also~\cite{BN-first}]
\label{thm:Khovanov}
The isomorphism class of $\CalH(L;R)$ depends on the isotopy class of $L$
only and, hence, is a link invariant. In particular, it does not depend on the
ordering chosen for the crossings of $D$. $\CalH(L;R)$ categorifies $J_L(q)$, a
version of the Jones polynomial defined by~\eqref{eq:Jones-skein}.
\end{thm}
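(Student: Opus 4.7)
The plan is to prove the three assertions in order: independence of the crossing ordering, invariance under Reidemeister moves, and the categorification identity. Throughout, I would work at the level of the chain complex $\CalC(D;R)$ and show that the relevant changes to $D$ induce either isomorphisms or chain homotopy equivalences, which descend to isomorphisms on homology.

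For independence of the ordering, I would observe that reordering the crossings only changes the sign exponent $\Gx(s_+,s_-)$ attached to each edge map $d_{s_+:s_-}$. Specifically, I would define, for each state $s$, a sign $\Ge(s)=(-1)^{\nu(s)}$ where $\nu(s)$ counts negative markers appearing in certain positions, and then show that rescaling each summand $\CalA(s)$ by $\Ge(s)$ intertwines the differentials associated to the two orderings. This gives a grading-preserving isomorphism of cochain complexes, hence an isomorphism on $\CalH$. The same bookkeeping simultaneously verifies that $d\circ d=0$ is independent of the ordering, which is the only place the signs are genuinely needed.

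For Reidemeister invariance, I would check each of R1, R2, R3 separately. In each case the complex $\CalC(D)$ decomposes, with respect to the markers assigned at the crossings involved in the move, into a direct sum of subcomplexes indexed by the $2^k$ choices at those $k$ crossings, and within this decomposition one can identify an acyclic subcomplex whose quotient is naturally isomorphic to $\CalC(D')$ for the simplified diagram $D'$. Concretely, for R1 one of the two resolutions at the new crossing produces an extra trivial circle, and the piece involving the tensor factor $X$ of that circle cancels via $m(-\otimes X)=-$; for R2 one exhibits a size-two acyclic subcomplex built from $m$ and $\Gi$ (or their duals) that splits off, leaving the complex of the smoothed diagram up to a grading shift by $w(D')-w(D)$; and for R3 one reduces to R2 by applying the R2 invariance twice in conjunction with a cone decomposition, exactly as in Bar-Natan \cite{BN-first}. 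The key accounting is that the normalization factors $i(s)$ and $j(s)$, which were chosen precisely to compensate for the writhe change, make the resulting chain equivalences grading-preserving. This Reidemeister step is the main obstacle, because the R3 case requires either a direct verification that several compositions of $m,\GD,\Gi,\Ge$ agree, or (more elegantly) a formal argument via mapping cones; either way the bulk of the technical work sits here.

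Finally, for the categorification statement, I would apply Lemma~\ref{lem:Euler-Jones} together with the standard fact that $\chi_q$ of a complex of graded free $R$-modules equals $\chi_q$ of its homology (take $R=\Q$ or invoke universal coefficients, using that the Betti numbers $h^{i,j}(L)$ by definition compute ranks). Combining this with $\chi_q(\CalC(D))=J_L(q)$ yields \eqref{eq:KhEuler-Jones}, completing the theorem.
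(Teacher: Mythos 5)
The paper itself does not prove Theorem~\ref{thm:Khovanov}: being a survey, it quotes the result from~\cite{Kh-Jones} and~\cite{BN-first}, and the only ingredient it verifies is the Euler-characteristic computation of Lemma~\ref{lem:Euler-Jones}. So the comparison is really with the cited sources, and your outline is the standard argument found there: a sign-rescaling of the summands $\CalA(s)$ to show independence of the crossing ordering, chain homotopy equivalences obtained by splitting off acyclic subcomplexes for R1 and R2 (with a mapping-cone reduction to R2 for R3, as in~\cite{BN-first}), and, for the categorification claim, the fact that $\chi_q$ of a complex of graded free modules equals the graded Euler characteristic of its homology (taken with ranks/Betti numbers), combined with Lemma~\ref{lem:Euler-Jones}; this last step is exactly how the paper itself links $\CalC(D)$ to $J_L(q)$.

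One detail in your R1 step is wrong as stated: the cancelling piece cannot be built from $m(-\otimes X)$, because $m(X\otimes X)=0$, so multiplying by the $X$-labelled factor of the extra circle is not injective and does not produce an acyclic subcomplex that splits off. The cancellation goes through the unit instead: $m(a\otimes 1)=a$ is an isomorphism, so it is the summand in which the extra circle carries $1$ that cancels (for the other kink one uses $\GD$ together with the counit $\Ge$). With that correction, and granting that the R2 and R3 verifications you only sketch are carried out as in~\cite{BN-first} --- including the check that the shifts $i(s)$, $j(s)$ absorb the writhe change so the equivalences are grading-preserving, which you correctly flag as the bulk of the work --- your plan is the standard proof that the paper defers to.
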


\begin{rem} One can think of $\CalC(D;R)$ as a bigraded (co)chain complex
$\CalC^{i,j}(D;R)$ with a differential of bidegree $(1,0)$. In this case, $i$
is the homological grading of this complex, and $j$ is its $q$-grading, also
called the {\em Jones grading}. Correspondingly, $\CalH(L;R)$ can be considered
to be a bigraded $R$-module as well.
\end{rem}

\begin{attn}\label{attn:grading-parity}
Let $\#L$ be the number of components of a link $L$. One can check that
$j(s)+|D_s|$ is congruent modulo $2$ to $\#L$ for every state $s$. It follows
that $\CalC(D;R)$ has non-trivial homogeneous components only in the degrees
that have the same parity as $\#L$. Consequently, $\CalH(L;R)$ is non-trivial
only in the $q$-gradings with this parity (see Example~\ref{ex:trefoil}).
\end{attn}

\begin{figure}
\centerline{\input{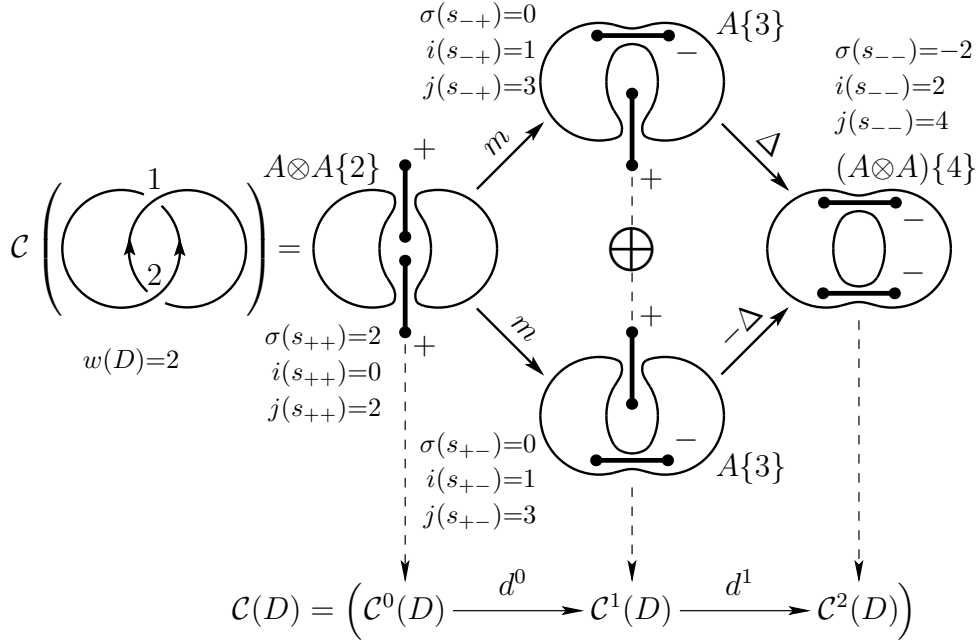}}
\caption{Khovanov chain complex for the Hopf link}\label{fig:Kh-Hopf}
\end{figure}

\begin{example}\label{ex:Hopf-example}
Figure~\ref{fig:Kh-Hopf} shows the Khovanov chain complex for the Hopf link
with the indicated orientation. The diagram has two positive crossings, so its
writhe number is $2$. Let $s_{\pm\pm}$ be the four possible resolutions of
this diagram, where each ``$+$'' or ``$-$'' describes the sign of the marker
at the corresponding crossings. The chosen ordering of crossings is depicted by
numbers placed next to them. By looking at Figure~\ref{fig:Kh-Hopf}, one
easily computes that
$\CalA(s_{++})=A^{\otimes2}\{2\}$, $\CalA(s_{+-})=\CalA(s_{-+})=A\{3\}$, and 
$\CalA(s_{--})=A^{\otimes2}\{4\}$. Correspondingly,
$\CalC^0(D)=\CalA(s_{++})=A^{\otimes2}\{2\}$, $\CalC^1(D)=\CalA(s_{+-})\oplus
\CalA(s_{-+})=(A\oplus A)\{3\}$, and
$\CalC^2(D)=\CalA(s_{--})=A^{\otimes2}\{4\}$.
It is convenient to arrange the four resolutions in the corners of a square
placed in the plane in such a way that its diagonal from $s_{++}$ to $s_{--}$
is horizontal. Then the edges of this square correspond to the maps between
the adjacent states (see Figure~\ref{fig:Kh-Hopf}). We notice that only one of
these maps, namely the one corresponding to the edge from $s_{+-}$ to $s_{--}$,
comes with the negative sign.
\end{example}

In general, $2^n$ resolutions of a diagram $D$ with $n$ crossings can be
arranged into an $n$-dimensional {\em cube of resolutions}, where vertices
correspond to the $2^n$ states of~$D$. The edges of this cube connect adjacent
pairs of states and can be oriented from $s_+$ to $s_-$. Every edge is assigned
either $m$ or $\GD$ with the sign $(-1)^{\Gx(s_+,s_-)}$, as described above.
It is easy to check that this makes each square (that is, a $2$-dimensional
face) of the cube anti-commutative (all squares are commutative without the
signs). Finally, the differential $d^i$ restricted to each summand $\CalA(s)$
with $i(s)=i$ equals the sum of all the maps assigned to the edges that
originate at~$s$.

\subsection{Reduced Khovanov homology}\label{sec:Kh-reduced}
Let, as before, $D$ be a diagram of an oriented link $L$. Fix a base point on
$D$ that is different from all the crossings. For each state $s$, we define
$\tCalA(s)$ in almost the same way as $\CalA(s)$, except that we assign $XA$
instead of $A$ to the circle from the resolution $D_s$ of $D$ that contains
that base point. That is,
$\tCalA(s)=\left((XA)\otimes A^{\otimes(|D_s|-1)}\right)\{j(s)\}$.
We can now build the {\em reduced Khovanov chain complex} $\tCalC(D;R)$ in
exactly the same way as $\CalC(D;R)$ by replacing $\CalA$ with $\tCalA$
everywhere. The grading shifts and differentials remain the same. It is easy
to see that $\tCalC(D;R)$ is a subcomplex of $\CalC(D;R)$ of index $2$. In
fact, it is the image of the chain map $\CalC(D;R)\to\CalC(D;R)$ that acts
by multiplying elements assigned to the circle containing the base point by
$X$.

\begin{defin}[Khovanov~\cite{Kh-patterns}, cf.~\ref{def:Khovanov}]
\label{def:Kh-reduced}
The homology of $\tCalC(D;R)$ is called the {\em reduced Khovanov Homology of
$L$} and is denoted by $\tCalH(L;R)$. It is clear from the construction of
$\tCalC(D;R)$ that its graded Euler characteristic equals $\tJ_L(q)$.
\end{defin}

\begin{thm}[Khovanov~\cite{Kh-patterns}, cf.~\ref{thm:Khovanov}]
\label{thm:Kh-reduced}
The isomorphism class of $\tCalH(L;R)$ is a link invariant that categorifies
$\tJ_L(q)$, a version of the Jones polynomial defined by~\eqref{eq:Jones-skein}
and~\eqref{eq:Jones-Jones}. Moreover, if two base points are chosen on the
same component of $L$, then the corresponding reduced Khovanov homologies are
isomorphic. On the other hand, $\tCalH(L;R)$ might depend on the component
of $L$ that the base point is chosen on.
\end{thm}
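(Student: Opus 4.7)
The plan is to build on Theorem~\ref{thm:Khovanov} by realizing $\tCalC(D;R)$ as the image of a concrete chain endomorphism $X_p$ of $\CalC(D;R)$: on each summand $\CalA(s)$, $X_p$ acts as multiplication by $X$ on the tensor factor of $A$ attached to the circle of $D_s$ that contains the basepoint $p$, and as the identity on the other tensor factors. A direct check with the rules~\eqref{eq:A-mult} and~\eqref{eq:A-comult} (using $X^2=0$) shows that $X_p$ commutes with both $m$ and $\GD$, so it commutes with the Khovanov differential, and its image is precisely $\tCalC(D;R)$. The categorification claim then follows by repeating the computation of Lemma~\ref{lem:Euler-Jones} with $\dim_q(XA)=q^{-1}$ on the basepoint circle in place of $\dim_q(A)=q+q^{-1}$, and invoking the relation~\eqref{eq:Jones-Jones}.

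For invariance under Reidemeister moves, I would first isotope $D$ so that the basepoint lies outside the disk in which the move is to be performed (always possible, since the basepoint is merely an interior point of an edge). Under this assumption the chain-homotopy equivalences $f\colon\CalC(D)\to\CalC(D')$ and $g\colon\CalC(D')\to\CalC(D)$ constructed in the proof of Theorem~\ref{thm:Khovanov} act as the identity on the tensor factor assigned to the basepoint circle, and therefore commute with $X_p$. Consequently they restrict to chain-homotopy equivalences between $\tCalC(D;R)$ and $\tCalC(D';R)$, showing that $\tCalH(L;R)$ depends only on the isotopy class of $L$ together with the (still to be pinned down) basepoint data.

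To obtain basepoint-independence on a single component, I would reduce to two elementary moves of the basepoint: sliding $p$ along an arc of $D$ between crossings, and sliding $p$ across a single crossing along one strand of that component. The first leaves the basepoint circle in every resolution literally unchanged, so $\tCalC(D;R)$ is unaffected. For the second, denoting the new basepoint by $p'$, in any resolution $s$ where both points happen to lie on the same circle of $D_s$ one has $X_p=X_{p'}$ on $\CalA(s)$, whereas in a resolution where they lie on two different circles one builds an explicit $R$-linear automorphism of $\CalA(s)$ swapping the two relevant tensor factors and conjugating $X_p$ to $X_{p'}$. The main obstacle will be assembling these local automorphisms coherently so that they commute with the cube maps $m$ and $\GD$ between adjacent resolutions; the same-component hypothesis is exactly what makes this global compatibility possible, since it forces the two competing circles to be merged by the differential in some neighboring resolution, pinning down the ambiguity.

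Finally, to see that the answer can genuinely depend on the chosen component it suffices to exhibit a single example. A \KhoHo computation on a two-component link whose components are knotted differently---for instance an unknot linked once with a trefoil---produces non-isomorphic reduced Khovanov homology groups depending on which of the two components contains the basepoint.
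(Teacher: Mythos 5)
Your first two steps are essentially the standard argument (the paper itself gives no proof, citing Khovanov's \emph{Patterns in knot cohomology I}): realizing $\tCalC(D;R)$ as the image of the endomorphism $X_p$, checking $X_p$ commutes with $m$ and $\GD$, and rerunning Lemma~\ref{lem:Euler-Jones} are all fine. Two refinements are needed, though. First, when the basepoint is outside the move disk it is \emph{not} true that the Reidemeister equivalences act as the identity on the tensor factor of the basepoint circle: that circle may well run through the disk and get merged or split by the maps. What you actually need, and what does hold, is the weaker statement that the maps \emph{and the homotopies} commute with $X_p$, because they are assembled from $\Gi,\Ge,m,\GD$, each of which is a map of $A$-modules for the action of $X$ inserted at the basepoint; you also must say this for the homotopies, since a homotopy equivalence that merely preserves a subcomplex need not restrict to one. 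Second, with the grading induced from $A$ one has $\dim_q(XA)=q^{-1}$, so your computation gives $\chi_q(\tCalC)=q^{-1}\tJ_L(q)$; to get $\tJ_L(q)$ on the nose you must adopt the paper's normalization $XA\simeq R\{0\}$ (an overall shift), otherwise the categorification claim is off by a factor of $q$.

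The genuine gap is the same-component basepoint independence. Your plan — in resolutions where $p$ and $p'$ lie on different circles, conjugate $X_p$ to $X_{p'}$ by an automorphism of $\CalA(s)$ swapping the two tensor factors — does not work as stated: such state-by-state swaps are not compatible with the edge maps $m$ and $\GD$ at the other crossings, and the heuristic that ``the two circles are merged in a neighboring resolution'' does not produce a chain map; you acknowledge the obstacle but offer no mechanism to overcome it, and I do not believe one exists along these lines. The standard repairs are: (i) observe that $(L,p)$ and $(L,p')$ are isotopic as \emph{pointed} links when $p,p'$ lie on the same component (slide the basepoint along the component by an ambient isotopy), and that diagrams of isotopic pointed links are related by Reidemeister moves performed away from the basepoint — which reduces this step entirely to the invariance you already proved; or (ii) show that $X_p$ and $X_{p'}$ are chain homotopic, with an explicit homotopy supported near the crossings the arc from $p$ to $p'$ traverses, and deduce the isomorphism of reduced homologies from that. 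Finally, the last sentence of the theorem requires an actual verified example: your candidate (unknot linked once with a trefoil) is asserted, not checked, and over $\Z_2$ no example can exist by Theorem~\ref{thm:me-Z2}, so the computation must be done over $\Q$ or $\Z$ and confirmed before this part can be considered proved.
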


Although $\tCalC(D;R)$ can be determined from $\CalC(D;R)$, it is in general
not clear how $\CalH(L;R)$ and $\tCalH(L;R)$ are related. There are several
examples of pairs of knots (the first one being $14^n_{9933}$ and
$\overline{15}^n_{129763}$\footnote{Here, $14^n_{9933}$ denotes the
non-alternating knot number 9933 with 14 crossings from the Knotscape knot
table~\cite{Knotscape} and $\overline{15}^n_{129763}$ is the mirror image of
the knot $15^n_{129763}$. See also remark on page~\pageref{rem:knots-enum}.})
that have the same rational Khovanov homology, but different rational reduced
Khovanov homology. No such examples are known for homologies over $\Z$ among
all prime knots with at most $15$ crossings. On the other hand, it is proved
that $\CalH(L;\Z_2)$ and $\tCalH(L;\Z_2)$ determine each other completely.

\begin{thm}[\cite{Sh-torsion}]\label{thm:me-Z2}
$\CalH(L;\Z_2)\simeq\tCalH(L;\Z_2)\otimes_{\Z_2}A_{\Z_2}$. In particular,
$\tCalH(L;\Z_2)$ does not depend on the component that the base point is
chosen on.
\end{thm}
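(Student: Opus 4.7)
The plan is to exhibit $\CalC(D;\Z_2)$ as a twisted extension of two graded shifts of $\tCalC(D;\Z_2)$ and then to show that this extension splits. The central observation is that, over $\Z_2$, multiplication by $X$ in the tensor factor associated with the base-point circle gives a chain map $X_0\colon\CalC(D;\Z_2)\to\CalC(D;\Z_2)$. The verification that $X_0$ commutes with the Khovanov differential reduces, at each resolution change, to checking commutation with $m$ and $\GD$: commutation with $m$ is immediate from commutativity of $A_{\Z_2}$, while commutation with $\GD$ uses $X^2=0$ to kill the unwanted cross term produced by applying $X$ to $\GD(1)=1\otimes X+X\otimes 1$. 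Since $X_0^2=0$, we also have $\operatorname{im}X_0=\ker X_0$, and this subcomplex is easily identified with a graded shift of $\tCalC(D;\Z_2)$ because the restriction of $d$ to it agrees, up to the shift, with the reduced differential.

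Consequently we obtain a short exact sequence of bigraded chain complexes of the form
\[
0\lra\tCalC(D;\Z_2)\{-1\}\lra\CalC(D;\Z_2)\stackrel{X_0}{\lra}\tCalC(D;\Z_2)\{1\}\lra 0,
\]
and hence a long exact sequence in homology whose connecting map has the form $\delta\colon\tCalH^{i,j-1}(L;\Z_2)\to\tCalH^{i+1,j+1}(L;\Z_2)$. Because $\Z_2$ is a field, once $\delta$ is shown to vanish the resulting short exact sequences split automatically and we obtain $\CalH(L;\Z_2)\simeq\tCalH(L;\Z_2)\{-1\}\oplus\tCalH(L;\Z_2)\{1\}$; since $A_{\Z_2}\simeq\Z_2\{1\}\oplus\Z_2\{-1\}$ as bigraded $\Z_2$-modules, the right-hand side is exactly $\tCalH(L;\Z_2)\otimes_{\Z_2}A_{\Z_2}$, giving the claimed isomorphism. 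The independence of $\tCalH(L;\Z_2)$ from the component carrying the base point then follows at once from the analogous statement for $\CalH(L;\Z_2)$ in Theorem~\ref{thm:Khovanov}, since a bigraded $\Z_2$-vector space whose graded dimensions have non-negative integer coefficients is uniquely recovered from its tensor product with $A_{\Z_2}$ by dividing by $q+q^{-1}$ in each homological degree.

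The content of the theorem, and the main obstacle, is thus the vanishing of $\delta$. My strategy would be to construct an explicit chain-level splitting. The naive $\Z_2$-linear section $\sigma\colon\tCalC(D;\Z_2)\{1\}\to\CalC(D;\Z_2)$ that places a $1$ in the base-point slot satisfies $X_0\circ\sigma=\id$ but is not itself a chain map; its defect $d\sigma-\sigma d$ takes values in $\tCalC(D;\Z_2)\{-1\}$ and represents $\delta$ at the chain level. I would then try to produce a $\Z_2$-linear homotopy $H\colon\tCalC(D;\Z_2)\{1\}\to\tCalC(D;\Z_2)\{-1\}$ with $dH+Hd$ equal to this defect, so that $\sigma-H$ becomes an honest chain-level splitting. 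The required cancellation exploits the full commutative Frobenius structure of $A_{\Z_2}$ together with the characteristic-$2$ identities $2=0$ and $X^2=0$ that already made $X_0$ a chain map in the first place, which is precisely the point at which the analogous statement fails over $\Z$ or $\Q$.
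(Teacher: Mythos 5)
Your reduction is set up correctly, but the argument stops exactly where the theorem begins. Everything up to the long exact sequence is routine: indeed the survey itself already records that multiplication by $X$ on the based circle is a chain map over \emph{any} ring $R$ (that is how $\tCalC(D;R)$ is described in Section~\ref{sec:Kh-reduced}), since $\GD(Xa)=(X\otimes1)\GD(a)$ follows from the Frobenius module property and $X^2=0$ integrally; so your suggestion that this is ``precisely the point'' where the statement fails over $\Z$ or $\Q$ misplaces the difficulty. The actual content of the theorem --- the vanishing of the connecting map $\delta$, equivalently a chain-level splitting of your short exact sequence --- is only announced: you say you ``would try to produce'' a homotopy $H$, but no $H$ is constructed and no argument is given that one exists. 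As written, the proposal is a plan for a proof, not a proof. (The survey gives no proof either; it quotes the result from~\cite{Sh-torsion}, so the gap must be filled from scratch.)

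The missing ingredient is concrete and is exactly what~\cite{Sh-torsion} supplies: the map $\nu\colon A_{\Z_2}\to A_{\Z_2}$ with $\nu(1)=0$, $\nu(X)=1$, extended to each state as a \emph{derivation}, $\nu=\sum_i\nu_i$ summed over \emph{all} circles of the resolution. On generators one checks that $\nu$ commutes with $m$ and $\GD$ only modulo $2$ (for instance $m\circ(\nu\otimes\id+\id\otimes\nu)(X\otimes X)=2X$ and $(\nu\otimes\id+\id\otimes\nu)\GD(1)=2\,(1\otimes1)$), so $\nu$ is a chain endomorphism of $\CalC(D;\Z_2)$ --- \emph{this}, not the chain-map property of $X_0$, is where characteristic $2$ is essential and where the argument breaks over $\Z$ and $\Q$. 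Since $X_0\nu+\nu X_0=\id$ and $X_0^2=\nu^2=0$, the operator $u=X_0\nu$ is an idempotent chain map with image $\tCalC(D;\Z_2)=\operatorname{im}X_0=\ker X_0$, and $\id+u$ projects onto a complementary subcomplex carried isomorphically by $X_0$ (up to the grading shift) onto the reduced complex; this splits $\CalC(D;\Z_2)$ and hence forces $\delta=0$. Note also that your naive section $\sigma$ (``place a $1$ in the base-point slot'', i.e.\ $\nu_0$ alone) genuinely fails to be a chain map whenever the based circle merges with another one: $\nu_0 m(1\otimes X)=1$ while $m(\nu_0\otimes\id)(1\otimes X)=0$. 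The homotopy you hoped for is precisely the sum of the $\nu_i$ over the \emph{other} circles, which corrects $\sigma$ to the restriction of $\nu$; so your strategy can be completed, but only after introducing and verifying this map, which is the heart of the theorem and is absent from your write-up.
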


\begin{rem}
$XA\simeq R\{0\}$ as a graded $R$-module. It follows that $\tCalC$ and
$\tCalH$ are non-trivial only in the $q$-gradings with parity different
from that of $\#L$, the number of components of $L$
(cf.~\ref{attn:grading-parity}).
\end{rem}

\subsection{Odd Khovanov homology}\label{sec:Kh-odd}
In 2007, Ozsv\'ath, Rasmussen and Szab\'o introduced~\cite{Khovanov-odd} an
{\em odd} version of the Khovanov homology. In their theory, the nilpotent
variables $X$ assigned to each circle in the resolutions of the link diagram
(see Section~\ref{sec:Kh-complex}) anti-commute rather than commute. The odd
Khovanov homology equals the original ({\em even}) one modulo $2$ and, in
particular, categorifies the same Jones polynomial. In fact, the corresponding
chain complexes are isomorphic as free bigraded $R$-modules and their
differentials are only different by signs. On the other hand, the resulting
homology theories often have drastically different properties. We define the
odd Khovanov homology below.

Let $L$ be an oriented link and $D$ its planar diagram. To each resolution $s$
of $D$ we assign a free graded $R$-module $\GL(s)$ as follows. Label all
circles from the resolution $D_s$ by some independent variables, say,
$X^s_1,X^s_2,\,\dots\,,X^s_{|D_s|}$ and let
$V_s=V(X^s_1,X^s_2,\,\dots\,,X^s_{|D_s|})$ be
a free $R$-module generated by them. We define $\GL(s)=\GL\!^*(V_s)$, the
exterior algebra of $V_s$. Then $\GL(s)=\GL\!^0(V_s)\oplus\GL\!^1(V_s)\oplus
\cdots\oplus\GL\!^{|D_s|}(V_s)$ and we grade $\GL(s)$ by specifying
$\GL(s)_{|D_s|-2k}=\GL\!^k(V_s)$ for each $0\le k\le|D_s|$, where
$\GL(s)_{|D_s|-2k}$ is the homogeneous component of $\GL(s)$ of degree
$|D_s|-2k$. It is an easy exercise for the reader to check that
$\dim_q(\GL(s))=\dim_q(A^{\otimes|D_s|})$.

\begin{figure}
\centerline{\input{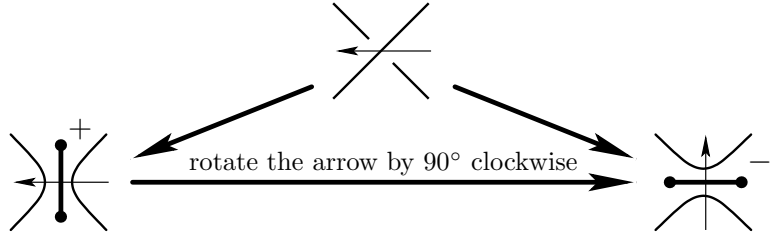}}
\caption{Choice of arrows at the diagram crossings}\label{fig:odd-arrows}
\end{figure}

Just as in the case of the even Khovanov homology, these $R$-modules $\GL(s)$
can be arranged into an $n$-dimensional cube of resolutions. Let
$\CalC_{\odd}^i(D)=\bigoplus_{i(s)=i}\GL(s)\{j(s\}$. Then, similarly to
Lemma~\ref{lem:Euler-Jones}, we have that
$\chi_q(\CalC_{\odd}(D))=J_L(q)$. In fact, $\CalC_{\odd}(D)\simeq\CalC(D)$ as
bigraded $R$-modules. In order to
define the differential on $\CalC_{\odd}$, we need to introduce an additional
structure, a choice of an arrow at each crossing of $D$ that is parallel to
the {\em negative} marker at that crossing (see Figure~\ref{fig:odd-arrows}).
There are obviously $2^n$ such choices. For every state $s$ on $D$, we place
arrows that connect two branches of $D_s$ near each (former) crossing
according to the rule from Figure~\ref{fig:odd-arrows}.

\begin{figure}
\centerline{\vbox{\halign{#\hfill\cr
\hskip-0.5em\input{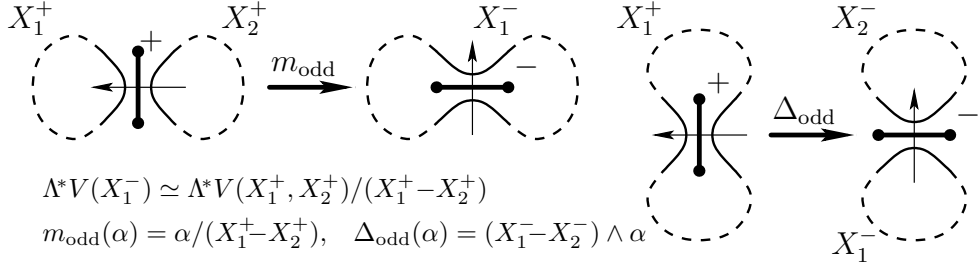}\cr
\noalign{\vskip -1.2\baselineskip}
\hbox{\vbox to0pt{\vss\halign{#\hfill\cr
$\GL\!^*V(X^-_1)\simeq\GL\!^*V(X^+_1,X^+_2)/(X^+_1{-}X^+_2)$\cr
\noalign{\medskip}
$m_{\odd}(\Ga)=\Ga/(X^+_1\!\!{-}X^+_2)$,\quad
$\GD_{\odd}(\Ga)=(X^-_1\!\!{-}X^-_2)\wedge\Ga$\cr
\noalign{\medskip}}}}\cr
}}}
\caption{Adjacent states and differentials in the odd Khovanov chain complex}
\label{fig:odd-diff}
\end{figure}

We now assign (graded) maps $m_{\odd}$ and $\GD_{\odd}$ to each edge of the cube
of resolutions that connects adjacent states $s_+$ and $s_-$. If $s_-$ is
obtained from $s_+$ by merging two circles together, then
$\GL(s_-)\simeq\GL(s_+)/(X^+_1-X^+_2)$,
where $X^+_1$ and $X^+_2$ are the generators
of $V_{s_+}$ corresponding to the two merging circles, as depicted in
Figure~\ref{fig:odd-diff}. We define $m_{\odd}:\GL(s_+)\to\GL(s_-)$ to be
this isomorphism composed with the projection
$\GL(s_+)\to\GL(s_+)/(X^+_1-X^+_2)$.

The case when one circle splits into two is more interesting. Let $X^-_1$ and
$X^-_2$ be the generators of $V_{s_-}$ corresponding to these two circles such
that the arrow points from $X^-_1$ to $X^-_2$ (see Figure~\ref{fig:odd-diff}).
Now for each generator $X^+_k$ of $V_{s_+}$, we define
$\GD_{\odd}(X^+_k)=(X^-_1-X^-_2)\wedge X^-_{\eta(k)}$ where $\eta$ is the
correspondence between circles in $D_{s_+}$ and $D_{s_-}$. While $\eta(1)$ can
equal either $1$ or $2$, this choice does not affect $\GD_{\odd}(X^+_1)$ since
$(X^-_1-X^-_2)\wedge X^-_2=X^-_1\wedge X^-_2=-X^-_2\wedge X^-_1=
(X^-_1-X^-_2)\wedge X^-_1$.

This definition makes each square in the cube of resolutions either
commutative, or anti-commutative, or both. The latter case means that both
double-composites corresponding to the square are trivial. This is a major
departure from the situation that we had in the even case, where each square
was commutative. In particular, it makes the choice of signs on the edges of
the cube much more involved.

\begin{thm}[Ozsv\'ath--Rasmussen--Szab\'o~\cite{Khovanov-odd}]
It is possible to assign a sign to each edge in this (odd) cube of resolutions
in such a way that every square becomes anti-commutative. This results in a
graded (co)chain complex $\CalC_{\odd}(D;R)$.
The homology $\CalH_{\odd}(L;R)$ of $\CalC_{\odd}(D;R)$ does not depend on the
choice of arrows at the crossings, the choice of edge signs, and some other
choices needed in the construction. Moreover, the isomorphism class of
$\CalH_{\odd}(L;R)$ is a link invariant, called {\em odd Khovanov homology},
that categorifies $J_L(q)$.
\end{thm}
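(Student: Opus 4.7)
The plan is to split the proof into four tasks: (a) show that edge signs making every square anti-commute exist, whence $d^2=0$; (b) show that the resulting complex is independent of the arrow, labeling, and sign choices up to isomorphism; (c) verify Reidemeister invariance; (d) read off categorification of $J_L(q)$ from the bigraded-module identification with $\CalC(D;R)$.

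For (a), the local (anti-)commutativity defect of each unsigned square of the cube $I^n$ defines a $2$-cochain $c\in C^2(I^n;\Z_2)$, equal to $0$ on anti-commutative squares and $1$ on commutative ones (squares with both compositions zero can be labeled either way). An edge-sign cochain $\epsilon\in C^1(I^n;\Z_2)$ makes every square anti-commutative precisely when $\delta\epsilon=c$. A direct check on the eight squares bounding each $3$-face shows $\delta c=0$, and since $H^2(I^n;\Z_2)=0$ a solution $\epsilon$ exists (one can also construct it inductively along a spanning tree). Then $d^2=0$ follows because each $2$-face contributes two opposite-sign compositions to $d\circ d$ that cancel.

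For (b), I would exhibit explicit isomorphisms between complexes built from different data. Reversing the arrow at a crossing negates $\GD_{\odd}$ on every edge through that crossing, which can be absorbed by flipping the corresponding edge signs; relabeling the generators of $V_s$ induces a signed exterior automorphism that intertwines the differentials after an analogous sign adjustment; and two valid edge-sign solutions differ by a $1$-cocycle on $I^n$, which since $H^1(I^n;\Z_2)=0$ is a coboundary, producing an isomorphism of complexes by rescaling each $\GL(s)$ by the associated $0$-cochain. Combined with $\dim_q\GL(s)=\dim_q A^{\otimes|D_s|}$, this also gives $\CalC_{\odd}(D;R)\simeq\CalC(D;R)$ as bigraded $R$-modules, so by Lemma~\ref{lem:Euler-Jones} $\chi_q(\CalC_{\odd}(D;R))=J_L(q)$; this reduces (d) to (c).

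For (c), I would follow the template of Khovanov~\cite{Kh-Jones} and Bar-Natan~\cite{BN-first}, producing explicit chain homotopy equivalences on the move-local subcubes for each of the three Reidemeister moves; by (b) the arrows and edge signs near the move can be chosen in whatever form is most convenient for the cancellation. The hard part will be exactly this step: the odd maps $m_{\odd}$ and $\GD_{\odd}$ are asymmetric in the circles they act on (the arrow distinguishes $X^-_1$ from $X^-_2$ in a split), so the contractible subcomplexes and homotopy operators that arise from the Frobenius algebra structure on $A$ in the even case must be rebuilt by hand inside $\GL\!^*V_s$, keeping track of how the arrow data at the moving crossings affects the wedge products. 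This arrow-dependent asymmetry, not the cohomological sign-assignment bookkeeping, is what genuinely distinguishes the odd theory from the even one.
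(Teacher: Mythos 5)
The paper you are working from does not actually prove this theorem: it is quoted from Ozsv\'ath--Rasmussen--Szab\'o with a citation, and the remark immediately following it even stresses that only the \emph{existence} of edge signs is asserted, with no explicit construction. So your proposal must be measured against the original argument of~\cite{Khovanov-odd}, whose overall architecture you do reproduce: obstruction-theoretic existence of signs on the contractible cube, explicit isomorphisms relating the various auxiliary choices, and move-local chain homotopy equivalences for Reidemeister invariance, with the Euler characteristic statement following from the bigraded identification with $\CalC(D;R)$.

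The genuine gap is in your step (a), which you treat as routine. The defect cochain $c$ is not well defined: a square on which both composites vanish is simultaneously commutative and anti-commutative, and such squares are not exceptional in the odd cube. Whether $\delta c=0$ holds depends on how those degenerate faces are labelled, and labelling them ``either way,'' as you propose, can destroy the cocycle condition on a $3$-face containing several of them. In~\cite{Khovanov-odd} the $2$-faces are classified into four types (commutative only, anti-commutative only, and two degenerate types distinguished by the circle-and-arrow configuration), and the assertion that there is a coherent labelling making the defect a cocycle is a genuine lemma, proved by enumerating the possible $3$-dimensional configurations; it is not a ``direct check on the eight squares bounding each $3$-face'' (a $3$-face has six square faces). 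Once that lemma is in place, $H^2(I^n;\Z_2)=0$ gives existence and $H^1(I^n;\Z_2)=0$ gives uniqueness up to isomorphism, exactly as you say. The same type analysis is also what rescues your arrow-reversal argument in (b): reversing the arrow at a crossing negates $\GD_{\odd}$ only on the \emph{split} edges in that direction (the merge maps $m_{\odd}$ are unaffected), and absorbing this into edge signs preserves anti-commutativity only because every square whose two parallel edges in that direction are of different kinds (one merge, one split) is automatically of the degenerate type. Your assessment that (c) is the substantial remaining work is fair, but the sign-assignment bookkeeping you dismiss is itself a nontrivial part of the odd theory rather than a formality.
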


\begin{rem}
There is no explicit construction for assigning signs to the edges of
the cube of resolutions in the case of the odd Khovanov chain complex. The
Theorem above only ensures that signs exist.
\end{rem}

\begin{attn}\label{attn:odd-Z2}
By comparing the definitions of $\CalC_{\odd}(D;\Z_2)$ and $\CalC(D;\Z_2)$, it
is easy to see that they are isomorphic as graded chain complexes (since the
signs do not matter modulo $2$). It follows that
$\CalH_{\odd}(D;\Z_2)\simeq\CalH(D;\Z_2)$ as well.
\end{attn}

\begin{attn}\label{attn:odd-reduced}
One can construct {\em reduced} odd Khovanov chain complex $\tCalC_{\odd}(D;R)$
and {\em reduced} odd Khovanov homology $\tCalH_{\odd}(L;R)$ using methods
similar to those from Sections~\ref{sec:Kh-reduced}. In this case, contrary to
the even situation, reduced and non-reduced odd Khovanov homology determine
each other completely (see~\cite{Khovanov-odd}).
Namely, $\CalH_{\odd}(L;R)\simeq\tCalH_{\odd}(L;R)\{1\}\oplus
\tCalH_{\odd}(L;R)\{-1\}$ (cf. Theorem~\ref{thm:me-Z2}). It is therefore enough
to consider the reduced version of the odd Khovanov homology only.
\end{attn}

\begin{figure}
\centerline{\input{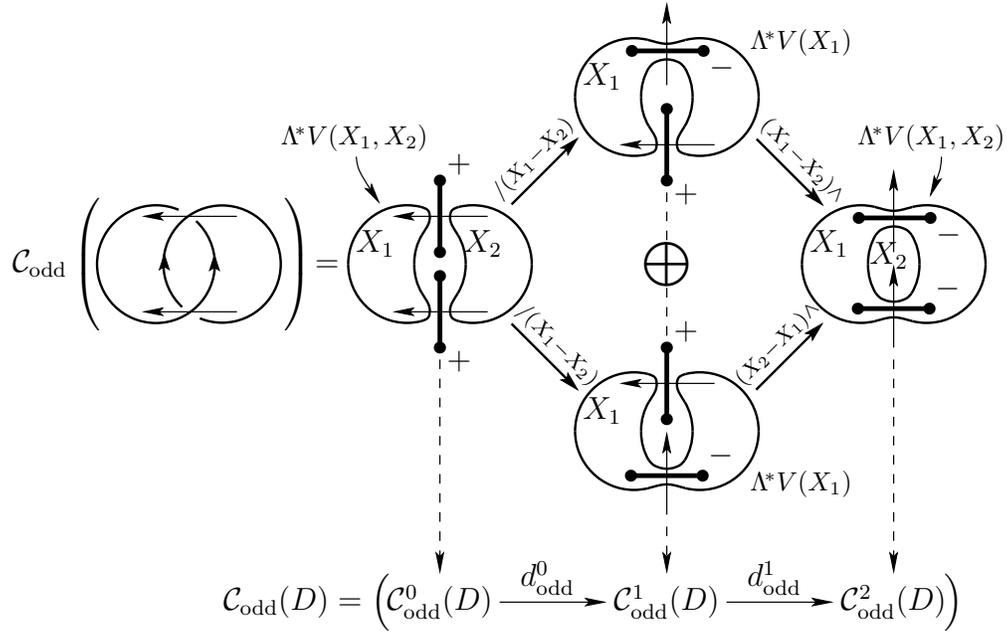}}
\caption{Odd Khovanov chain complex for the Hopf link}\label{fig:Kh-Hopf-odd}
\end{figure}

\begin{example}
The odd Khovanov chain complex for the Hopf link is depicted in
Figure~\ref{fig:Kh-Hopf-odd}. All the grading shifts in this case are the same
as in Example~\ref{ex:Hopf-example} and on Figure~\ref{fig:Kh-Hopf}, so we do
not list them again. We notice that the resulting square of resolutions is
anti-commutative, so no adjustment of signs is needed.
\end{example}

The odd Khovanov homology should provide an insight into interrelations
between Khovanov and Heegaard-Floer~\cite{OS-HF} homology theories. Its
definition was motivated by the following result.

\begin{thm}[Ozsv\'ath--Szab\'o~\cite{OS-spectral}]\label{thm:OS-spec}
For each link $L$ with a diagram $D$, there exists a spectral sequence with
$E^1=\tCalC(D;\Z_2)$ and $E^2=\tCalH(L;\Z_2)$ that converges to the
$\Z_2$-Heegaard-Floer homology $\widehat{HF}(\GS(L);\Z_2)$ of the double
branched cover $\GS(L)$ of $S^3$ along $L$.
\end{thm}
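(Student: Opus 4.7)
The plan is to build a filtered chain complex whose $E^\infty$ page is $\widehat{HF}(\Sigma(L);\Z_2)$ and whose $E^1$ page is the reduced Khovanov chain complex, with $d_1$ matching the Khovanov differential. The starting observation is that the double branched cover of $S^3$ along the trivial $k$-component unlink is $\#^{k-1}(S^1\times S^2)$, whose hat Heegaard--Floer homology over $\Z_2$ is the exterior algebra on $k-1$ generators. For each state $s$ of $D$ the resolution $D_s$ is an unlink on $|D_s|$ components, so $\widehat{HF}(\Sigma(D_s);\Z_2)$ has the same graded rank as the summand of $\tCalC(D;\Z_2)$ associated with $s$. This isomorphism of graded $\Z_2$-vector spaces is what the spectral sequence ultimately should realize.

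First I would choose a bridge position for $D$ and construct a Heegaard multi-diagram $\mathcal{H}$ for $\Sigma(L)$ that simultaneously represents $\Sigma(D_s)$ for every state $s$: one fixes a genus $n$ Heegaard surface and, for each crossing, a pair of attaching circles (one per resolution) such that the two circles meet transversally in two points. The $2^n$ vertices of the cube of resolutions then correspond to the $2^n$ subsets of attaching curves obtained by picking one circle at each crossing, and the edges of the cube correspond to elementary handleslides / surgery triangles. Next I would build the total chain complex $X$ as the direct sum of the Heegaard--Floer complexes associated to all vertices, with maps given by the holomorphic triangle counts associated to each edge of the cube, together with higher polygon maps along higher-dimensional faces. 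The standard $A_\infty$ / hypercube--of--resolutions formalism (the same one used by Ozsv\'ath--Szab\'o for link surgery) guarantees that $X$ is a chain complex and is filtered by the cube coordinate; since all but one vertex correspond to diagrams for which $c_1$-indeterminacy and positivity issues are controllable over $\Z_2$, the construction is well-defined.

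Then I would run the associated spectral sequence. By construction, the $E^0$ term is the direct sum over vertices of the Heegaard--Floer complexes, so $E^1=\bigoplus_s\widehat{HF}(\Sigma(D_s);\Z_2)$. Identifying each summand with $\Lambda^*(V_s)/(\text{basepoint generator})\cong \tCalA(s)$ via the standard model for $\widehat{HF}(\#^{k-1}(S^1\times S^2))$, and matching the gradings via the shifts $j(s),i(s)$ built into the cube, yields $E^1\cong\tCalC(D;\Z_2)$ as a bigraded $\Z_2$-module. The differential $d_1$ is, by definition, the surgery triangle map between adjacent vertices; a local computation for the one-crossing model reproduces exactly the merge/split maps of the Khovanov complex modulo $2$. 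Hence $E^2=\tCalH(L;\Z_2)$. Convergence to $\widehat{HF}(\Sigma(L);\Z_2)$ is the formal consequence of the cube filtration being bounded.

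The main obstacle will be step two: verifying that the surgery triangle (and the higher polygon maps along faces of the cube) genuinely assemble into a well-defined filtered complex over $\Z_2$ and that the induced $d_1$ is precisely the Khovanov differential rather than some twisted version. This is the technical heart of \cite{OS-spectral}, and it requires checking that the holomorphic polygon counts at each face are $\Z_2$-consistent (which uses the cocycle condition for the cube, carefully chosen basepoints avoiding the branch locus), and performing the explicit local model calculation at a single crossing to match $m$ and $\Delta$. Once these are in place, the rest of the argument is the standard spectral sequence of a filtered chain complex.
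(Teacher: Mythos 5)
The paper gives no proof of this theorem---it is quoted directly from~\cite{OS-spectral}---and your outline is essentially the argument of that cited paper: realize the branched double covers of the $2^n$ resolutions as the vertices of the link-surgeries (cube-of-resolutions) spectral sequence associated to the framed link in $\GS(L)$ lying over arcs at the crossings, identify $E^1$ with $\tCalC(D;\Z_2)$ and $d_1$ with the Khovanov differential via the $\widehat{HF}(\#^{k-1}(S^1\times S^2);\Z_2)$ computation and a local one-crossing model, and conclude by boundedness of the cube filtration. The only caveats, which the survey's statement also suppresses, are that the spectral sequence carries only the cube (homological) filtration grading rather than the full bigrading with the quantum grading, and that in~\cite{OS-spectral} the $E^2$ page is the reduced Khovanov homology of the mirror of $L$, a discrepancy absorbed by orientation conventions.
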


\begin{conjec}\label{conj:OS-spec}
There exists a spectral sequence that starts with $\tCalC_{\odd}(D;\Z)$
and $\tCalH_{\odd}(L;\Z)$ and converges to $\widehat{HF}(\GS(L);\Z)$.
\end{conjec}

\section{Properties of the Khovanov homology}\label{sec:Kh-prop}

In this section we summarize main properties of the Khovanov homology and list
related constructions. We emphasize similarities and differences in properties
exhibited by different versions of the Khovanov homology. Some of them were
already mentioned in the previous sections.

\setcounter{thm}{0}
\begin{attn}
Let $L$ be an oriented link and $D$ its planar diagram. Then
\begin{itemize}
\item $\tCalC(D;R)$ is a subcomplex of $\CalC(D;R)$ of index $2$.
\item $\CalH_{\odd}(L;\Z_2)\simeq\CalH(L;\Z_2)$ and
$\tCalH_{\odd}(L;\Z_2)\simeq\tCalH(L;\Z_2)$.
\item $\chi_q(\CalH(L;R))=\chi_q(\CalH_{\odd}(L;R))=J_L(q)$ and\\
$\chi_q(\tCalH(L;R))=\chi_q(\tCalH_{\odd}(L;R))=\tJ_L(q)$.
\item
$\CalH(L;\Z_2)\simeq\tCalH(L;\Z_2)\otimes_{\Z_2}\!A_{\Z_2}$~\cite{Sh-torsion}
and $\CalH_{\odd}(L;R)\simeq\tCalH_{\odd}(L;R)\{1\}\oplus
\tCalH_{\odd}(L;R)\{-1\}$~\cite{Khovanov-odd}. On the other hand,
$\CalH(L;\Z)$ and $\CalH(L;\Q)$ do not split in general.
\item For links, $\tCalH(L;\Z_2)$ and $\tCalH_{\odd}(L;R)$ do not depend on the
choice of a component with the base point. This is, in general, not the case
for $\tCalH(L;\Z)$ and $\tCalH(L;\Q)$.
\item If $L$ is a non-split alternating link, then $\CalH(L;\Q)$,
$\tCalH(L;R)$, and $\tCalH_{\odd}(L;R)$ are completely determined by the
Jones polynomial and signature of~$L$~\cite{Kh-patterns,Lee,Khovanov-odd}.
\item $\CalH(L;\Z_2)$ and $\CalH_{\odd}(L;\Z)$ are invariant under the
component-preserving link mutations~\cite{Bloom-mutation,Wehrli-mutation2}. It
is unclear whether the same holds true for $\CalH(L;\Z)$. On the other hand,
$\CalH(L;\Z)$ is known not to be preserved under a mutation that
exchanges components of a link~\cite{Wehrli-mutation1} and under a
cabled mutation~\cite{DGShT}.
\item $\CalH(L;\Z)$ almost always has torsion (except for several special
cases), but mostly of order $2$. The first knot with $4$-torsion is the
$(4,5)$-torus knot that has 15 crossings. The first known knot with
$3$-torsion is the $(5,6)$-torus knot with 24 crossings. On the other hand,
$\CalH_{\odd}(L;\Z)$ was observed to have torsion of various orders even for
knots with relatively few crossings (see remark on
page~\pageref{rem:odd-torsion}), although orders $2$ and $3$ are the most
popular.
\item $\tCalH(L;\Z)$ has very little torsion. The first knot with torsion has
13 crossings. On the other hand, $\tCalH_{\odd}(L;\Z)$ has as much torsion as
$\CalH_{\odd}(L;\Z)$.
\end{itemize}
\end{attn}

\begin{rem}
The properties above show that $\tCalH_{\odd}(L;\Z)$ behaves
similarly to $\tCalH(L;\Z_2)$ but not to $\tCalH(L;\Z)$. This is by design
(see~\ref{thm:OS-spec} and~\ref{conj:OS-spec}).
\end{rem}

\subsection{Homological thickness}
\begin{defin}\label{def:hw}
Let $L$ be a link. The {\em homological width} of $L$ over a ring $R$ is the
minimal number of adjacent diagonals $j-2i=const$ such that $\CalH(L;R)$ is
zero outside of these diagonals.
It is denoted by $\hw_R(L)$. The {\em reduced homological width}, $\thw_R(L)$
of $L$, {\em odd homological width}, $\ohw_R(L)$ of $L$, and {\em reduced odd
homological width}, $\tohw_R(L)$ of $L$ are defined similarly.
\end{defin}

\begin{attn}\label{attn:hom-width}
It follows from~\ref{attn:odd-reduced} that $\tohw_R(L)=\ohw_R(L)-1$. The same
holds true in the case of the even Khovanov homology over $\Q$:
$\thw_\Q(L)=\hw_\Q(L)-1$ (see~\cite{Kh-patterns}).
\end{attn}

\begin{defin}\label{def:thick-thin}
A link $L$ is said to be {\em homologically thin} over a ring $R$, or simply
$R$H-thin, if $\hw_R(L)=2$. $L$ is {\em homologically thick}, or $R$H-thick,
otherwise. We define {\em odd-homologically} thin and thick, or simply
$R$OH-thin and $R$OH-thick, links similarly. 
\end{defin}

\begin{thm}[Lee, Ozsv\'ath--Rasmussen--Szab\'o,
Manolescu--Ozsv\'ath~\cite{Lee,Khovanov-odd,QA-links}]\label{thm:alt-thin}
Quasi-alternating links (see Section~\ref{sec:quasi-alt} for the definition)
are $R$H-thin and $R$OH-thin for every ring $R$. In particular, this is
true for non-split alternating links.
\end{thm}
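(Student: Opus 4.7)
The plan is to argue by induction on the determinant $\det(L)$ of the link $L$, using the unoriented skein exact triangles that exist for both Khovanov and odd Khovanov homology. The base case is the unknot (at the bottom of the quasi-alternating hierarchy), whose Khovanov homology and odd Khovanov homology are supported on the two adjacent diagonals $j - 2i = \pm 1$, hence is both $R$H-thin and $R$OH-thin over any ring $R$.

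For the inductive step, suppose $L$ is quasi-alternating, presented with a distinguished crossing whose two resolutions $L_0$ and $L_1$ are quasi-alternating with $\det(L_0) + \det(L_1) = \det(L)$ and with strictly smaller determinants. I would invoke the long exact sequence
\[
\cdots \lra \CalH^{i,j}(L_1) \lra \CalH^{i,j}(L) \lra \CalH^{i',j'}(L_0) \lra \CalH^{i+1,j}(L_1) \lra \cdots
\]
coming from the short exact sequence of chain complexes associated to the two resolutions at the chosen crossing (with the $i',j'$ shifts depending on the signs of the crossing and the writhe change). By the inductive hypothesis, $\CalH(L_0;R)$ and $\CalH(L_1;R)$ each live on two adjacent $\delta$-diagonals. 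The first real task is to check that the grading shifts in the skein triangle place the diagonals of $L_0$ and those of $L_1$ into the \emph{same} pair of adjacent $\delta$-diagonals in $\CalH(L;R)$; this is a direct computation using the definition of $i(s),j(s)$ in \eqref{eq:state-ij} and the relation between the writhe of $L$ and the writhes of the resolved diagrams.

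Once the diagonals are aligned, thinness of $L$ will follow provided the connecting homomorphism in the long exact sequence is zero, so that $\CalH(L;R)$ is just the direct sum of shifted copies of $\CalH(L_0;R)$ and $\CalH(L_1;R)$. This is where the determinant hypothesis enters. Thinness of $L_0$ and $L_1$ together with $|\CalH(L_i;R)|$ controlled by $|\det(L_i)|$ (essentially because on a thin link the total rank equals the determinant, as one sees by evaluating the Jones polynomial at $q = i$ after substituting into \eqref{eq:KhEuler-Jones}) gives an upper bound on the rank of $\CalH(L;R)$. Combining this with the lower bound $|\det(L)|$ from Euler characteristic considerations, and using $\det(L) = \det(L_0) + \det(L_1)$, forces the connecting map to vanish on each diagonal. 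Hence $\CalH(L;R)$ itself is supported on two adjacent diagonals, so $L$ is $R$H-thin.

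The argument for $R$OH-thinness is parallel: the odd Khovanov complex admits the analogous skein long exact sequence (as follows from its cube-of-resolutions description in Section~\ref{sec:Kh-odd}), and the grading shifts and rank-counting work the same way. The main obstacle in both cases is the rank bookkeeping needed to force vanishing of the connecting map; non-split alternating links are covered because they are quasi-alternating by a standard application of the definition to a crossing at which the diagram remains alternating after either resolution.
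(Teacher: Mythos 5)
Your overall skeleton---induction on the determinant through the unoriented skein long exact sequence, with the odd case handled by the observation that the same sequence exists there---is the skeleton of the Manolescu--Ozsv\'ath argument that the paper cites (the survey itself gives no proof, only the references and the later remark that the proof is based on the sequences~\eqref{eq:Kh-exact}). But there is a genuine gap at the step you dismiss as ``a direct computation'': the claim that the shifted supports of $\CalH(L_0;R)$ and $\CalH(L_1;R)$ land on the \emph{same} pair of adjacent diagonals inside $\CalH(L;R)$. The grading shifts in the exact triangle are indeed read off from the diagrams (sign of the crossing, writhe change $\Go$), but the diagonal on which $\CalH(L_i;R)$ is supported is an intrinsic invariant of the link $L_i$; it is not visible from $i(s)$, $j(s)$ or writhe data, so no diagram-level computation can align the two. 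To close the induction one must strengthen the inductive hypothesis to record \emph{which} diagonals occur --- supported on $j-2i=-\sigma\pm1$ (up to convention) --- and then prove the signature relations among $\sigma(L)$, $\sigma(L_0)$, $\sigma(L_1)$ at a quasi-alternating crossing. That is precisely where the hypothesis $\det(L)=\det(L_0)+\det(L_1)$ with all determinants positive is really spent (via Gordon--Litherland/Murasugi-type arguments), and it is the heart of the proof in~\cite{QA-links}. Your proposal never mentions signatures and instead spends the determinant hypothesis on rank counting, which cannot substitute: the bounds $\operatorname{rank}\ge|\det(L)|$ and $\operatorname{rank}\le\det(L_0)+\det(L_1)$ hold whether or not the diagonals align, so your argument as written only shows that $\CalH(L;\Q)$ is a shifted direct sum of the homologies of the two resolutions, supported on as many as four diagonals --- which is not thinness.

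Two smaller points. First, vanishing of the connecting homomorphism is not what thinness needs: exactness alone confines the support of $\CalH(L;R)$ to the union of the shifted supports of $\CalH(L_0;R)$ and $\CalH(L_1;R)$, over any ring $R$ (torsion included), once the diagonals are known to coincide; the connecting map only matters for the finer conclusions (freeness over $\Z$, total rank equal to the determinant) that Manolescu--Ozsv\'ath also establish. Second, if you do keep that route, note that over $\Z$ the rank count only forces the connecting map to have torsion image, not to vanish, and ``total rank equals the determinant'' is a statement about the reduced theory ($\tCalH$); for the unreduced homology the correct count is twice the determinant, and the evaluation of $J_L$ you propose degenerates because of the factor $q+1/q$.
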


\begin{thm}[Khovanov~\cite{Kh-patterns}]\label{thm:adequate-thick}
Adequate links are $R$H-thick for every $R$.
\end{thm}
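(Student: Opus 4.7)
The plan is to exhibit two explicit non-trivial Khovanov classes at the extreme ends of the homological grading of an adequate diagram, compute the diagonals on which they live, and then argue, via the Kauffman--Murasugi--Thistlethwaite state-circle inequality, that the spread of these diagonals forces $\hw_R(L) \ge 3$.

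Fix an adequate diagram $D$ of $L$ with $n$ crossings and writhe $w$. The all-positive and all-negative states $s_\pm$ (with $\Gs(s_\pm) = \pm n$) are the unique states realizing the extreme homological gradings $i(s_\pm) = (w \mp n)/2$. Because $D$ is $+$-adequate, at every crossing the two arcs of the $s_+$-resolution lie on distinct circles of $D_{s_+}$, so each edge out of $s_+$ merges two different circles and the map $d_{s_+:s}$ acts as $m$ on the two relevant $A$-factors. The element $\alpha_+ := X^{\otimes|D_{s_+}|} \in \CalA(s_+) \subset \CalC^{i(s_+)}(D;R)$ is therefore a cycle (since $m(X \otimes X) = 0$), and since $\CalC^{i(s_+)-1}(D;R) = 0$ it is trivially not a boundary. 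Hence $\alpha_+$ represents a non-trivial class in $\CalH^{i(s_+),\,j(s_+)-|D_{s_+}|}(L;R)$, lying on the diagonal $\delta_+ := \frac{w+n}{2} - |D_{s_+}|$. Dually, $-$-adequacy makes every edge into $s_-$ a $\GD$-split, and since neither $\GD(1) = 1 \otimes X + X \otimes 1$ nor $\GD(X) = X \otimes X$ contains the monomial $1 \otimes 1$, the cycle $\alpha_- := 1^{\otimes|D_{s_-}|}$ is not a boundary either, yielding a non-trivial class in $\CalH^{i(s_-),\,j(s_-)+|D_{s_-}|}(L;R)$ on the diagonal $\delta_- := \frac{w-n}{2} + |D_{s_-}|$.

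The two diagonals differ by $\delta_+ - \delta_- = n - |D_{s_+}| - |D_{s_-}|$. By Kauffman--Murasugi--Thistlethwaite, $|D_{s_+}| + |D_{s_-}| \le n + 2$, with equality exactly for reduced alternating diagrams; combined with the parity constraint $|D_{s_+}| + |D_{s_-}| \equiv n \pmod 2$ (each marker flip changes $|D_s|$ by $\pm 1$), this forces $|D_{s_+}| + |D_{s_-}| \le n$ for any adequate non-alternating $D$, so $\delta_+ \ge \delta_-$ with the inequality between the diagonals reversed relative to the alternating case.

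The hard part will be converting this separation into the bound $\hw_R(L) \ge 3$. In the borderline cases where the gap $\delta_+ - \delta_-$ is only $0$ or $2$, the two extremal classes lie on at most two adjacent diagonals and do not by themselves force the bound; one must then produce a third non-trivial homology class on an intermediate diagonal, by a finer analysis of the differentials at bidegrees $(i(s_\pm) \pm 1,\, j(s_\pm) \mp 2\mp|D_{s_\pm}|)$ using the combinatorics of the circles and chords in the extremal resolutions $D_{s_\pm}$. It is at this step that the full strength of adequacy, together with the assumption that $L$ admits no reduced alternating diagram, is invoked to rule out a potential cancellation of the third class in homology.
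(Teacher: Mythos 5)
Your first two paragraphs are correct and reproduce the standard ``corner class'' argument: for a $+$-adequate diagram the monomial $X^{\otimes|D_{s_+}|}$ in the all-positive resolution is a cycle (every edge out of $s_+$ is a merge and $m(X\otimes X)=0$) and trivially not a boundary, and dually $1^{\otimes|D_{s_-}|}$ is never hit because the image of $\GD$ contains no $1\otimes1$ term; so both extreme groups are nonzero over any $R$, and your diagonal computation $\delta_+-\delta_-=n-|D_{s_+}|-|D_{s_-}|$ is right (the KMT equality case is really ``connected alternating'' rather than ``reduced alternating,'' and one must assume the diagram connected, but these are minor). Note that the survey itself gives no proof --- the theorem is quoted from Khovanov's paper \cite{Kh-patterns} --- and that, as literally stated, it needs a non-alternating hypothesis, since reduced alternating diagrams are adequate while alternating links are $R$H-thin; you smuggle that hypothesis in only in your last sentence.

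The genuine gap is that the argument stops exactly where the theorem begins. Two nonzero classes on diagonals $\delta_-$ and $\delta_+$ only give $\hw_R(L)\ge(\delta_+-\delta_-)/2+1$, which reaches $3$ only when $n-|D_{s_+}|-|D_{s_-}|\ge4$; for an adequate non-alternating connected diagram the KMT inequality plus parity give merely $n-|D_{s_+}|-|D_{s_-}|\ge0$, so in the borderline cases (gap $0$ or $2$) your two classes are perfectly consistent with thinness, as you yourself concede. The promised ``third class on an intermediate diagonal by a finer analysis of the differentials near the corners'' is not carried out, and it is not a routine completion: the groups in the bidegrees adjacent to the corners lie on diagonals adjacent to $\delta_\pm$ (they are governed by the combinatorics of the extremal resolutions), so there is no reason they produce a class on a genuinely new diagonal, and nothing in your setup rules out their vanishing. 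The known route to this theorem requires input beyond the cube corners --- one must pin down which two diagonals a thin link's homology would be forced to occupy and show the corner classes cannot both sit there --- and that is precisely the step your sketch defers. As written, the proposal proves only the (correct, standard) nontriviality of the extremal groups, not $R$H-thickness.
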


\begin{attn}
Homological thickness of a link $L$ often does not depend on the base ring.
The first prime knot with $\hw_{\Q}(L)<\hw_{\Z_2}(L)$ and
$\hw_{\Q}(L)<\hw_{\Z}(L)$ is $15^n_{41127}$ with 15 crossings (see
Figure~\ref{fig:odd-thick}). The first prime
knot that is $\Q$H-thin but $\Z$H-thick, $16^n_{197566}$, has 16 crossings
(see Figure~\ref{fig:diff-thickness}). Its mirror image,
$\overline{16}^n_{197566}$ is both $\Q$H- and $\Z$H-thin but is $\Z_2$H-thick
with $\CalH^{-8,-21}(\overline{16}^n_{197566};\Z_2)\simeq\Z_2$, for example,
because of the Universal Coefficient Theorem. Also observe that
$\CalH^{9,25}(16^n_{197566};\Z)$ and
$\CalH^{-8,-25}(\overline{16}^n_{197566};\Z)$ have $4$-torsion, shown in a
small box in the tables.
\end{attn}

\begin{rem}
\label{rem:knots-enum}
Throughout this paper we use the following notation for knots: knots with 10
crossings or less are numbered according to the Rolfsen's knot
table~\cite{Rolfsen-book} and knots with 11 crossings or more are numbered
according to the knot table from Knotscape~\cite{Knotscape}. Mirror images of
knots from either table are denoted with a bar on top. For example,
$\overline{9}_{46}$ is the mirror image of the knot number 46 with 9 crossings
from the Rolfsen's table and $16^n_{197566}$ is the non-alternating
knot number 197566 with 16 crossings from the Knotscape's one.
\end{rem}

\begin{figure}
\centerline{\input knot-15n_41127-table-Red}
\par\medskip
$\hw_{\Q}=3,\qquad \thw_{\Q}=2,\qquad \hw_{\Z}=4,\qquad \thw_{\Z}=3$
\par\bigskip\bigskip
\centerline{\input knot-15n_41127-table-Odd}
\par\medskip
$\ohw_{\Q}=4,\qquad \tohw_{\Q}=3,\qquad \ohw_{\Z}=4,\qquad \tohw_{\Z}=3$

\caption{Integral reduced even Khovanov homology (above) and odd Khovanov
Homology (below) of the knot $15^n_{41127}$}\label{fig:odd-thick}
\end{figure}

\begin{figure}
\centerline{\resizebox{\hsize}{!}{\input knot-16n_197566-table}}
\par\medskip
The free part of $\CalH(16^n_{197566};\Z)$ is supported on diagonals 
$j-2i=7$ and $j-2i=9$. On the other hand, there is $2$-torsion on the diagonal
$j-2i=5$. Therefore, $16^n_{197566}$ is $\Q$H-thin, but $\Z$H-thick and
$\Z_2$H-thick.
\par\bigskip\bigskip\bigskip
\centerline{\resizebox{\hsize}{!}{\input knot-16n_-197566-table}}
\par\medskip
$\CalH(\overline{16}^n_{197566};\Z)$ is supported on diagonals 
$j-2i=-7$ and $j-2i=-9$.\\
But there is $2$-torsion on the diagonal $j-2i=-7$.\\
Therefore, $\overline{16}^n_{197566}$ is $\Q$H-thin and
$\Z$H-thin, but $\Z_2$H-thick.
\par\bigskip
\caption{Integral Khovanov homology of the knots $16^n_{197566}$ and
$\overline{16}^n_{197566}$}\label{fig:diff-thickness}
\end{figure}

\begin{attn}
Odd Khovanov homology is often thicker over $\Z$ than the even one. This
is crucial for applications (see Section~\ref{sec:Kh-appl}).
On the other hand, $\tohw_{\Q}(L)\le\thw_{\Q}(L)$ for all but one prime knot
with at most $15$ crossings. The homology for this knot, $15^n_{41127}$, is
shown in Figure~\ref{fig:odd-thick}. Please observe that
$\tCalH_{\odd}(15^n_{41127})$ has $3$-torsion (in gradings $(-2,-2)$ and
$(-1,0)$), while $\tCalH(15^n_{41127})$ has none.
\end{attn}

\subsection{Lee spectral sequence and the Knight-Move Conjecture}
In~\cite{Lee} Eun Soo Lee introduced a structure of a spectral sequence
on the rational 
Khovanov chain complex $\CalC(D;\Q)$ of a link diagram $D$. Namely, Lee
defined a differential $d':\CalC(D;\Q)\to\CalC(D;\Q)$ of bidegree $(1,4)$ by
setting
\begin{equation}\label{eq:Lee-diff}
\begin{split}
m':A\otimes A\to A:&\quad
m'(1{\otimes}1)=m'(1{\otimes}X)=m'(X{\otimes}1)=0,\quad m'(X{\otimes}X)=1\\
\GD':A\to A\otimes A:&\quad \GD'(1)=0,\qquad\GD'(X)=1\otimes 1\\
\end{split}
\end{equation}
It is straightforward to verify that $d'$ is indeed a differential and that it
anti-commutes with $d$, that is $d\circ d'+d'\circ d=0$. This makes
$(\CalC(D;\Q),d,d')$ into a double complex. Let $d'_*$ be the differential
induced by $d'$ on $\CalH(L;\Q)$. Lee proved that $d'_*$ is functorial, that
is, it commutes with isomorphisms induced on $\CalH(L;\Q)$ by isotopies of
$L$. It follows that there exists a spectral sequence with
$(E_1,d_1)=(\CalC(D;\Q),d)$ and
$(E_2,d_2)=(\CalH(L;\Q),d'_*)$ that converges to the homology of the total
(filtered) complex of $\CalC(D;\Q)$ with respect to the differential $d+d'$.
It is called the {\em Lee spectral sequence}. The differentials $d_n$ in this
spectral sequence have bidegree $(1,4(n-1))$.

\begin{thm}[Lee~\cite{Lee}]\label{thm:Lee}
If $L$ is an oriented link with $\#L$ components, then
$H(\mathrm{Total}(\CalC(D;\Q)),d+d')$,
the limit of the Lee spectral sequence, consists
of $2^{n-1}$ copies of $\Q\oplus\Q$, each located in a specific homological
grading that is explicitly defined by linking numbers of the components of
$L$. In particular, if $L$ is a knot, then the Lee spectral sequence converges
to $\Q\oplus\Q$ localed in homological grading~$0$.
\end{thm}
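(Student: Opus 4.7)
The plan is to simplify the total complex $(\mathrm{Total}(\CalC(D;\Q)), d+d')$ by a change of basis on $A$. Over $\Q$ the combined multiplication $m+m'$ turns $A=\Q[X]/X^2$ into the Frobenius algebra $\Q[X]/(X^2-1)$, which splits via the orthogonal idempotents $a=(1+X)/2$ and $b=(1-X)/2$ satisfying $a^2=a$, $b^2=b$, $ab=0$, $a+b=1$, $a-b=X$. Rewriting $\Gi,\Ge,m+m',\GD+\GD'$ in the $\{a,b\}$ basis makes the cube of resolutions strikingly transparent: generators of $\CalA(s)$ are labelings of the circles of $D_s$ by $a$'s and $b$'s, a merge kills two circles whose labels disagree and keeps the common label otherwise, and a split duplicates the label onto both new circles.

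With this structure in hand, each of the $2^{\#L}$ orientations $o$ of $L$ produces a canonical cycle $\mathfrak{s}_o$: take the oriented (Seifert) resolution $s_o$ obtained by choosing at each crossing the marker that agrees with $o$, and label each Seifert circle by $a$ if it is oriented counterclockwise and by $b$ otherwise. The local description of the differential shows directly that no adjacent resolution can contribute (every merge happens between circles with opposite orientations and hence opposite labels, so it kills the generator; every split doubles a unique label, so the two contributions cancel out because of the sign difference on the edges of the cube), whence $(d+d')\mathfrak{s}_o=0$.

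The main step is to prove that the classes $[\mathfrak{s}_o]$ span $H(\mathrm{Total}(\CalC(D;\Q)),d+d')$. The strategy is to first verify this on the unknot (where $A$ is two-dimensional, spanned by $a$ and $b$, matching the two orientations) and on the unlink (by the tensor formula $\dim_q(\CalA(s))=q^{j(s)}(q+q^{-1})^{|D_s|}$), and then to show that the Lee homology is a link invariant and that under each Reidemeister move the canonical generators are transported bijectively onto canonical generators of the new diagram. Equivalently, one may show via an explicit chain homotopy, as Lee does, that the total homology has rank exactly $2^{\#L}$; since the $\mathfrak{s}_o$ are linearly independent (they are supported in distinct resolutions when $\#L\ge 1$), they must form a basis.

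Finally, one pairs each orientation $o$ with its global reversal $\bar o$: the resolutions $s_o$ and $s_{\bar o}$ coincide and the labelings are obtained from each other by swapping $a\leftrightarrow b$, so $\mathfrak{s}_o$ and $\mathfrak{s}_{\bar o}$ lie in the same bidegree. A short computation with~\eqref{eq:state-ij} rewrites $\sigma(s_o)=n_+(o)-n_-(o)$ and $w(D)$ in terms of the linking numbers between the sublink $L_o\subset L$ on which $o$ agrees with the given orientation and its complement, which gives the explicit homological grading claimed, and yields $2^{\#L-1}$ pairs each contributing a copy of $\Q\oplus\Q$. For a knot, the only pairing is $\{o,\bar o\}$ with $o$ the given orientation, and the formula collapses to homological grading $0$. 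The principal obstacle is the middle step---ruling out any additional classes in Lee homology beyond the $\mathfrak{s}_o$---which requires either a careful Reidemeister-invariance argument at the level of the filtered complex or an explicit cancellation lemma on the cube of resolutions.
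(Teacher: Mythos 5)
The survey itself does not prove this theorem --- it is quoted from Lee~\cite{Lee} --- so your proposal can only be measured against Lee's argument, which it follows in outline (idempotent basis $a,b$ for $m+m'$, $\GD+\GD'$; canonical cycles indexed by orientations; grading count via linking numbers). Within that outline there are two genuine gaps. First, your labeling rule for the canonical generators is wrong: labeling a circle of the oriented resolution by $a$ or $b$ according to whether it is oriented counterclockwise or clockwise does not make $\mathfrak{s}_o$ a cycle. Lee's rule uses the parity of the nesting depth of the circle (the number of circles separating it from infinity) \emph{plus} its rotational orientation. In the oriented resolution the two arcs at any crossing are co-oriented; two adjacent co-oriented circles are either unnested with opposite rotational orientations or nested with the same rotational orientation. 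In the nested case --- which already occurs for the standard trefoil diagram, whose two Seifert circles are nested and co-oriented --- your rule gives both circles the same label, $m+m'$ does not vanish on $a\otimes a$, and $(d+d')\mathfrak{s}_o\neq 0$. So your claim that ``every merge happens between circles with opposite orientations'' is exactly what fails. The remark about splits canceling in pairs is also off: every edge leaving the oriented resolution is a merge (two adjacent co-oriented arcs cannot lie on one embedded circle, and edges only leave crossings carrying the positive marker), and in any case a single edge contributes a single term, so there is nothing to cancel ``because of the sign difference.''

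Second, the heart of the theorem --- that the $[\mathfrak{s}_o]$ span, i.e.\ that $H(\mathrm{Total}(\CalC(D;\Q)),d+d')$ has dimension exactly $2^{\#L}$ --- is not actually argued; you defer it to ``as Lee does'' via an unspecified Reidemeister-invariance or cancellation argument and flag it yourself as the principal obstacle. That deferred step is the theorem: Lee proves it by an explicit analysis of the deformed complex, and the later simplifications (in the $\{a,b\}$ basis the complex decomposes and each crossing can be contracted, leaving one generator per orientation) also require a concrete argument, not just the observation that the count is right for unlinks. Until it is supplied, your proposal only exhibits candidate classes (and, because of the labeling error above, not even cycles in general). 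A smaller slip: $\mathfrak{s}_o$ and $\mathfrak{s}_{\bar o}$ are supported in the \emph{same} resolution (reversing every component does not change the oriented resolution), so linear independence should be deduced from the fact that distinct $a/b$-labelings of a fixed resolution are distinct basis vectors, not from ``distinct resolutions.'' The final bookkeeping --- pairing $o$ with $\bar o$, computing the homological grading of $\mathfrak{s}_o$ from~\eqref{eq:state-ij} in terms of linking numbers, and specializing to grading $0$ for knots --- is the right computation and matches Lee's.
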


The following Theorem is the cornerstone in the definition of the Rasmussen
invariant, one of the main applications of the Khovanov homology (see
Section~\ref{sec:Rasmussen}).

\begin{thm}[Rasmussen~\cite{Jake-Milnor}]\label{thm:Jake-infinity}
If $L$ is a knot, then the two copies of $\Q$ in the limiting term of the Lee
spectral sequence for $L$ are ``neighbors'', that is, their $q$-gradings are
different by $2$.
\end{thm}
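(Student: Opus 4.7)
The plan is to use Lee's explicit cycles for $d + d'$ and read off their filtration degrees. Over $\Q$, change basis in $A = \Q[X]/(X^2)$ from $\{1, X\}$ to $\mathbf{a} := 1+X$, $\mathbf{b} := 1-X$; these are (proportional to) the idempotents of the combined multiplication $m + m'$, so the total Frobenius structure diagonalizes. Lee showed that for every orientation $o$ of $L$ there is a distinguished cycle $\mathfrak{s}_o \in \CalA(s_o)$, supported on the oriented (Seifert) resolution, in which each Seifert circle is labeled either $\mathbf{a}$ or $\mathbf{b}$ according to a combinatorial rule determined by the orientation of that circle. These $2^{\#L}$ cycles span the limiting homology of Theorem~\ref{thm:Lee}.

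For a knot $K$ there are only two orientations, $o$ and its reverse $\bar o$, and reversing the orientation of $K$ reverses every Seifert circle simultaneously. Since Lee's labelling rule is natural with respect to this symmetry, $\mathfrak{s}_{\bar o}$ is obtained from $\mathfrak{s}_o$ by swapping $\mathbf{a} \leftrightarrow \mathbf{b}$ on every one of the $n := |D_{s_o}|$ circles. Writing the labelling through signs $\epsilon_i \in \{\pm 1\}$, we obtain
\[
\mathfrak{s}_o = \bigotimes_{i=1}^{n}(1 + \epsilon_i X), \qquad
\mathfrak{s}_{\bar o} = \bigotimes_{i=1}^{n}(1 - \epsilon_i X).
\]

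Next I extract the $q$-gradings. Since $s_o$ places a positive marker at each positive crossing and a negative marker at each negative crossing, $\Gs(s_o) = w(D)$ and \eqref{eq:state-ij} gives $j(s_o) = w(D)$. In $\CalA(s_o) = A^{\otimes n}\{w(D)\}$ the tensor $T_S$ having $X$ in positions $S \subseteq \{1,\ldots,n\}$ and $1$ elsewhere carries $q$-grading $w(D) + n - 2|S|$. Direct expansion yields
\[
\mathfrak{s}_o \pm \mathfrak{s}_{\bar o} \;=\; \sum_{S \subseteq \{1,\ldots,n\}} \bigl(1 \pm (-1)^{|S|}\bigr)\!\prod_{i \in S} \epsilon_i \; T_S.
\]
For the plus sign the $S = \emptyset$ term survives with coefficient $2$, so $\mathfrak{s}_o + \mathfrak{s}_{\bar o}$ has maximal $q$-grading $n + w(D)$. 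For the minus sign the $S = \emptyset$ term cancels, but the $n$ terms with $|S| = 1$ sit in distinct tensor positions and so cannot cancel each other; each appears with nonzero coefficient $\pm 2\epsilon_i$, giving maximal $q$-grading exactly $n + w(D) - 2$.

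The $E_\infty$-page of the Lee spectral sequence is the associated graded of Lee homology with respect to the $q$-filtration, and the $q$-grading of a surviving class is precisely the maximal $q$-grading of a representative cycle. Since $\{\mathfrak{s}_o + \mathfrak{s}_{\bar o},\; \mathfrak{s}_o - \mathfrak{s}_{\bar o}\}$ is a basis of the two-dimensional limit, the two $\Q$ summands sit in $q$-gradings $n + w(D)$ and $n + w(D) - 2$, differing by exactly $2$. The only delicate step in carrying this out is the verification that Lee's $\mathbf{a}/\mathbf{b}$ label is literally flipped on every single Seifert circle under reversal of the knot's orientation — a property which uses in an essential way that $K$ is connected, so that one orientation reversal inverts every Seifert circle at once; for multi-component links this uniform swap fails and one indeed finds more than two generators arranged with different filtration behaviour, explaining why the statement is formulated only for knots.
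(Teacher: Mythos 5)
Your setup (Lee's canonical cycles $\mathfrak{s}_o=\bigotimes_i(1+\epsilon_iX)$, $\mathfrak{s}_{\bar o}=\bigotimes_i(1-\epsilon_iX)$ in the oriented resolution, $j(s_o)=w(D)$, and the expansion showing that $\mathfrak{s}_o+\mathfrak{s}_{\bar o}$ has components only in degrees $n+w, n+w-4,\dots$ while $\mathfrak{s}_o-\mathfrak{s}_{\bar o}$ has them only in degrees $n+w-2, n+w-6,\dots$) is fine, and it is the natural starting point; note the survey itself gives no proof of this theorem, it only cites Rasmussen. The gap is in the last step: it is not true that ``the $q$-grading of a surviving class is the maximal $q$-grading of a representative cycle.'' Since $d'$ has bidegree $(1,4)$, the total differential is non-decreasing in $q$, so the subcomplexes preserved by $d+d'$ are $\{q\text{-degree}\ge j\}$, and the filtration ($E_\infty$) grading of a class in the limit is the maximum over \emph{all} representatives of the \emph{minimal} $q$-degree of the representative; the top-degree component of one chosen cycle is irrelevant, and a single representative only gives a lower bound. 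Your conclusion that the two classes sit in gradings $n+w(D)$ and $n+w(D)-2$ is diagram-dependent and false: for the right trefoil ($w=3$, $n=2$, Example~\ref{ex:trefoil}) it would place a surviving $\Q$ at $(i,j)=(0,5)$, but $E_\infty^{0,5}$ is a subquotient of $\CalH^{0,5}(K;\Q)=0$; the actual gradings are $1$ and $3$ (this is $s=2$).

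What your computation does prove is a mod-$4$ statement: every component of $\mathfrak{s}_o+\mathfrak{s}_{\bar o}$ has $q\equiv n+w\pmod 4$ and every component of $\mathfrak{s}_o-\mathfrak{s}_{\bar o}$ has $q\equiv n+w+2\pmod 4$; since for a knot the complex splits into the two subcomplexes with fixed $q$ mod $4$ (the differential changes $q$ by $0$ or $4$), the two filtration gradings are distinct and differ by $2$ modulo $4$, giving $s_{\max}-s_{\min}\ge 2$. The missing half, and the real content of the theorem, is the upper bound $s_{\max}-s_{\min}\le 2$. One standard way to get it: multiplication by $X$ on the circle through a base point (in the Lee algebra, where $X^2=1$) is a filtered chain map of filtered degree $-2$ which sends $\mathfrak{s}_o\mapsto\pm\mathfrak{s}_o$ and $\mathfrak{s}_{\bar o}\mapsto\mp\mathfrak{s}_{\bar o}$, hence interchanges $\mathfrak{s}_o+\mathfrak{s}_{\bar o}$ and $\mathfrak{s}_o-\mathfrak{s}_{\bar o}$ up to sign; therefore each of the two filtration gradings is at least the other minus $2$, and combined with the mod-$4$ separation the difference is exactly $2$. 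Without such an argument (or an equivalent one, e.g.\ via reduced Lee homology) the proof is incomplete. A minor side remark: your closing explanation of why links fail is off --- reversing \emph{all} components still flips every label; what changes for links is that there are additional orientations whose canonical generators live in other homological gradings.
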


\begin{cor}\label{cor:lee-collapse}
If the Lee spectral sequence for a link $L$ collapses after the second page
(that is, $d_n=0$
for $n\ge3$), then $\CalH(L;\Q)$ consists of one ``pawn-move'' pair in
homological grading~$0$ and multiple ``knight-move'' pairs, shown below, with
appropriate grading shifts.
\[
\hbox{\rm Pawn-move pair: }
\begin{array}{|c|}\oldhline\Q\\ \oldhline\Q\\ \oldhline\end{array}\qquad\qquad
\hbox{\rm Knight-move pair: }
\begin{array}{|c|c|}\oldhline&\Q\\ \oldhline&\\ \oldhline \Q&\\ \oldhline\end{array}
\]
\end{cor}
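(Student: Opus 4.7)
The plan is to reduce the corollary to the standard splitting of a finite-dimensional chain complex over a field. The structural input is that the $E_2$-page of the Lee spectral sequence is $\CalH(L;\Q)$ equipped with an induced differential $d_2$ of bidegree $(1,4)$. The collapse hypothesis $d_n=0$ for $n\ge3$ is equivalent to $E_3=E_\infty$, so the bigraded homology $H(\CalH(L;\Q),d_2)$ coincides with $E_\infty$. For $L$ a knot, Theorem~\ref{thm:Lee} identifies $E_\infty$ with a copy of $\Q\oplus\Q$ concentrated in homological grading $0$, and Theorem~\ref{thm:Jake-infinity} additionally forces the two copies to sit in $q$-gradings differing by $2$, which is precisely the pawn-move pair in the statement.

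Next, I would invoke the elementary fact that any finite-dimensional bigraded chain complex over $\Q$ splits (non-canonically) as its homology plus an acyclic summand, and that the acyclic summand further decomposes as a direct sum of two-term complexes of the form $\Q\xrightarrow{\cong}\Q$. This step is immediate over a field: pick a basis of the image of $d_2$, lift each basis vector to a preimage, and pair them off. Because $d_2$ shifts the bigrading by exactly $(1,4)$, each two-term summand contributes one generator of $\CalH(L;\Q)$ at bigrading $(i,j)$ and one at $(i+1,j+4)$, which is exactly a knight-move configuration. Combined with the pawn-move pair coming from the surviving homology, this yields the desired decomposition $\CalH(L;\Q)=(\text{pawn-move pair})\oplus\bigoplus(\text{knight-move pairs})$ with appropriate grading shifts.

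I do not expect a serious obstacle; the entire argument is structural and uses no information about $L$ beyond what Theorems~\ref{thm:Lee} and~\ref{thm:Jake-infinity} already supply. The one point worth flagging is that the corollary is phrased for a general link $L$, whereas for $\#L\ge2$ Theorem~\ref{thm:Lee} produces $2^{\#L-1}$ copies of $\Q\oplus\Q$ at homological gradings controlled by pairwise linking numbers. The splitting argument above applies verbatim provided one reads ``one pawn-move pair in homological grading $0$'' as shorthand for the full collection of $2^{\#L-1}$ pawn-move pairs placed in those specific gradings; this specializes to the stated formulation when $L$ is a knot.
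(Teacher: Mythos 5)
Your argument is correct and is exactly the intended one: collapse gives $E_3=E_\infty$, and over the field $\Q$ the complex $(E_2,d_2)=(\CalH(L;\Q),d'_*)$ splits as its surviving homology (the pawn-move pair supplied by Theorems~\ref{thm:Lee} and~\ref{thm:Jake-infinity}) plus two-term acyclic summands, each of which is a knight-move pair because $d_2$ has bidegree $(1,4)$. The only caution concerns your closing remark: for $\#L\ge2$ Theorem~\ref{thm:Jake-infinity} does not assert that the surviving copies of $\Q\oplus\Q$ are pawn-move pairs, so the multi-component reading claims slightly more than the cited results give; the corollary is really aimed at knots.
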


\begin{cor}
Since $d_3$ has bidegree $(1,8)$, the Lee spectral sequence collapses after
the second page for all knots with homological width $2$ or $3$, in
particular, for all alternating and quasi-alternating knots. 
Hence, Corollary~\ref{cor:lee-collapse} can be applied to such knots.
\end{cor}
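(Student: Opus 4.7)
The plan is a direct bidegree count against the $\delta$-spread of the homology, where we set $\delta = j - 2i$. The first step is to record that the differential $d_n$ on the $n$-th page of the Lee spectral sequence has bidegree $(1, 4(n-1))$, so it shifts $\delta$ by $4(n-1) - 2 = 4n - 6$. For $n = 2$ this shift equals $2$, while for every $n \geq 3$ it is at least $6$.

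The second step is to observe that each page $E_n$ with $n \geq 2$ is a bigraded subquotient of $E_2 = \CalH(L;\Q)$ and is therefore supported on no wider a set of $\delta$-diagonals than $\CalH(L;\Q)$ itself. If $\hw_\Q(L) \leq 3$, Definition~\ref{def:hw} places $\CalH(L;\Q)$ on at most three consecutive occupied diagonals; since by~\ref{attn:grading-parity} $j$ (and hence $\delta$) has fixed parity for a knot, adjacent occupied diagonals differ by exactly $2$. Consequently the $\delta$-support of $E_n$ lies inside a set of the form $\{c, c+2, c+4\}$, of total spread at most $4$.

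Combining the two steps, for $n \geq 3$ the differential $d_n$ shifts $\delta$ by at least $6$, which strictly exceeds the spread $4$ of the support. Hence the image of $d_n$ lies outside the support of $E_n$, so $d_n = 0$. This gives $E_3 = E_\infty$, which is precisely the collapse statement $d_n = 0$ for $n \geq 3$. For the second assertion, alternating and quasi-alternating knots are $\Q$H-thin by Theorem~\ref{thm:alt-thin}, i.e.\ $\hw_\Q(L) = 2 \leq 3$, so the argument applies and Corollary~\ref{cor:lee-collapse} becomes available for all such knots.

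The proof is essentially a pure bidegree check, so no real obstacle is expected. The only point meriting slight care is invoking the parity constraint from~\ref{attn:grading-parity}: without it, three ``adjacent'' diagonals could naively be read as spanning six units of $\delta$, which would leave $d_3$ just barely possible; the parity observation closes this gap cleanly.
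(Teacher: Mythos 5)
Your argument is correct and is exactly the bidegree count the paper intends: $d_n$ shifts $\delta=j-2i$ by $4n-6\ge 6$ for $n\ge 3$, while each $E_n$ ($n\ge2$) is a subquotient of $\CalH(L;\Q)$, whose $\delta$-support for a knot of homological width at most $3$ spans at most $4$ (three diagonals spaced by $2$ because of the parity constraint), so $d_n=0$ for $n\ge3$; thinness of alternating and quasi-alternating knots then comes from Theorem~\ref{thm:alt-thin}. This matches the paper's (implicit) proof, so there is nothing further to add.
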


\begin{knightmove}[Garoufalidis--Khovanov--Bar-Natan~\cite{BN-first,Kh-Jones}]
\label{knight-move}
The conclusion of Corollary~\ref{cor:lee-collapse} is true for every knot.
\end{knightmove}

\begin{rem}
There are currently no known counter-examples to the Knight-Move Conjecture.
In fact, the Lee spectral sequence can be proved (in one way or another) to
collapse after the second page for every known example of Khovanov homology.
\end{rem}

\begin{rem}
While the Lee spectral sequence exists over any ring $R$, the statement of
Theorem~\ref{thm:Lee} does not hold true for all of them. In particular, it is
wrong over~$\Z_2$. In this case, though, a similar theory was constructed by
Paul Turner~\cite{Turner-Char2}. In fact, his construction works for reduced
Khovanov homology as well because of~\ref{thm:me-Z2}. While
Theorems~\ref{thm:Lee} and~\ref{thm:Jake-infinity} are still true over $\Z_p$
with odd prime $p$, Knight-Move Conjecture is known to be false over such
rings~\cite{BN-fast}.
\end{rem}

\begin{rem}
The Lee spectral sequence has no analog in the odd and reduced Khovanov
homology theories (except over $\Z_2$, as noted above, where the two theories
coincide). The Knight-Move Conjecture has no analog in these theories either.
\end{rem}

\def\Cpic#1{%
  \CalC\!\left(\vcenter{\hbox{\includegraphics[scale=0.35]{#1}}}\right)\!}
\def\Hpic#1{%
  \CalH\!\left(\vcenter{\hbox{\includegraphics[scale=0.35]{#1}}}\right)\!}
\subsection{Long exact sequence of the Khovanov homology}
One of the most useful tools in studying Khovanov homology is the long exact
sequence that categorifies the Kauffman's {\em unoriented} skein relation for
the Jones polynomial~\cite{Kh-Jones}. If we forget about the grading, then it
is clear from the construction from Section~\ref{sec:Kh-complex} that
$\,\Cpic{hsmooth_Xing-black}\,$ is a subcomplex of
$\,\Cpic{just_Xing-black}\,$ and $\,\Cpic{vsmooth_Xing-black}\simeq
\Cpic{just_Xing-black}/\Cpic{hsmooth_Xing-black}\,$ (see also
Figure~\ref{fig:Kh-Hopf}). Here, 
$\vcenter{\hbox{\includegraphics[scale=0.35]{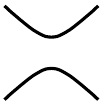}}}$ and
$\vcenter{\hbox{\includegraphics[scale=0.35]{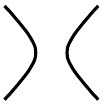}}}$ depict
link diagrams where a single crossing
$\vcenter{\hbox{\includegraphics[scale=0.35]{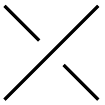}}}$ is
resolved in a negative or, respectively, positive direction. This results in a
short exact sequence of non-graded chain complexes:
\begin{equation}\label{eq:short-exact}
0\lra\Cpic{hsmooth_Xing-black}\stackrel{in}{\lra}\Cpic{just_Xing-black}
\stackrel{p}{\lra}\Cpic{vsmooth_Xing-black}\lra0,
\end{equation}
where $in$ is the inclusion and $p$ is the projection.

In order to introduce grading into~\eqref{eq:short-exact}, we need to consider
the cases when the crossing to be resolved is either positive or negative.
We get (see~\cite{Jake-comparison}):
\begin{equation}\label{eq:short-exact-orient}\begin{split}
0\lra\Cpic{hsmooth_Xing-black}\{2{+}3\Go\}[1{+}\Go]\stackrel{in}{\lra}
\Cpic{pos_Xing-black}\stackrel{p}{\lra}
\Cpic{smooth_Xing-nomark}\{1\}\lra0,\\[\smallskipamount]
0\lra\Cpic{smooth_Xing-nomark}\{-1\}\stackrel{in}{\lra}
\Cpic{neg_Xing-black}\stackrel{p}{\lra}
\Cpic{hsmooth_Xing-black}\{1{+}3\Go\}[\Go]\lra0,
\end{split}\end{equation}
where $\Go$ is the difference between the numbers of negative crossings in the
unoriented resolution
$\vcenter{\hbox{\includegraphics[scale=0.35]{hsmooth_Xing-black}}}$ (it has to
be oriented somehow in order to define its Khovanov chain complex) and 
in the original diagram. The notation $\CalC[k]$ is used to represent a
shift in the homological grading of a complex $\CalC$ by $k$. The graded
versions of $in$ and $p$ are both homogeneous, that is, have bidegree $(0,0)$.

By passing to homology in~\eqref{eq:short-exact-orient}, we get the following
result.

\begin{thm}[Khovanov, Viro,
Rasmussen~\cite{Kh-Jones,Viro-defs,Jake-comparison}]\label{thm:Kh-exact}
The Khovanov homology is subject to the following long exact sequences:
\begin{equation}\label{eq:Kh-exact}\begin{split}
\cdots\lra\Hpic{smooth_Xing-nomark}\{1\}\stackrel{\partial}{\lra}
\Hpic{hsmooth_Xing-black}\{2{+}3\Go\}[1{+}\Go]\stackrel{in_*}{\lra}
\Hpic{pos_Xing-black}\stackrel{p_*}{\lra}
\Hpic{smooth_Xing-nomark}\{1\}\lra\cdots\\[\smallskipamount]
\cdots\lra\Hpic{smooth_Xing-nomark}\{-1\}\stackrel{in_*}{\lra}
\Hpic{neg_Xing-black}\stackrel{p_*}{\lra}
\Hpic{hsmooth_Xing-black}\{1{+}3\Go\}[\Go]\stackrel{\partial}{\lra}
\Hpic{smooth_Xing-nomark}\{-1\}\lra\cdots
\end{split}\end{equation}
where $in_*$ and $p_*$ are homogeneous and $\partial$ is the connecting
differential and has bidegree $(1,0)$.
\end{thm}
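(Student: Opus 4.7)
The plan is to derive the long exact sequences directly from the short exact sequences of chain complexes in~\eqref{eq:short-exact-orient} via the standard zig-zag lemma in homological algebra. The first task is therefore to establish those short exact sequences rigorously, paying careful attention to the bigrading. I would fix a distinguished crossing $c$ of the diagram and partition every Kauffman state $s$ according to the marker assigned at $c$. Those states with a negative marker at $c$ form a subcomplex (with respect to the full differential $d$), because the only edge of the cube that changes the marker at $c$ flips it from positive to negative and thus points \emph{into} the negative-marker face, never out of it. Choosing an ordering of crossings in which $c$ is last, the sign conventions in $d^i=\sum_{(s_+,s_-)}(-1)^{\xi(s_+,s_-)}d_{s_+:s_-}$ simplify so that the restriction of $d$ to the negative-marker face coincides (as a signed sum over adjacent pairs) with the Khovanov differential of the diagram obtained by actually resolving $c$ with its negative marker; the induced differential on the quotient matches the Khovanov differential of the positively-resolved diagram. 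The maps $in$ and $p$ in~\eqref{eq:short-exact} are by construction injective and surjective chain maps, and their composition is zero, so~\eqref{eq:short-exact} is a short exact sequence of ungraded chain complexes.

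The second step is to track the grading shifts, which is where the parameter $\omega$ enters. For each summand $\CalA(s)$, formulas~\eqref{eq:state-ij} for $i(s)$ and $j(s)$ depend on $w(D)$ and $\sigma(s)$, both computed relative to the original diagram. When the negatively-resolved (unoriented) smoothing $\vcenter{\hbox{\includegraphics[scale=0.35]{hsmooth_Xing-black}}}$ is given some orientation in order to define its own Khovanov complex, its writhe differs from $w(D)$ by an amount controlled by $\omega$, the difference between numbers of negative crossings in the two diagrams. Similarly, flipping a positive marker at $c$ to negative shifts both $i(s)$ and $j(s)$ by $1$. Substituting these into~\eqref{eq:state-ij} on the two sides of $in$ gives, after a short algebraic comparison, exactly the shift $\{2+3\omega\}[1+\omega]$ for the inclusion in the positive-crossing case and $\{1\}$ for the projection; the negative-crossing case is entirely symmetric and yields the second line of~\eqref{eq:short-exact-orient}. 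Since $in$ and $p$ are the obvious inclusion and projection on each homogeneous piece, they are bihomogeneous of bidegree $(0,0)$ after the shifts are incorporated.

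With~\eqref{eq:short-exact-orient} in hand as a short exact sequence of bigraded chain complexes with bigrading-preserving maps, the zig-zag lemma produces the long exact sequence~\eqref{eq:Kh-exact}. The connecting homomorphism $\partial$ is constructed in the usual way (lift through $p$, apply $d$, pull back through $in$); it automatically has homological degree $+1$, and because each intermediate step preserves the $q$-grading, $\partial$ has $q$-degree $0$, yielding bidegree $(1,0)$.

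The main obstacle is the bookkeeping in the second step. The dependence on $\omega$ is the subtle point: an orientation of the unoriented smoothing must be chosen merely to make its Khovanov complex a well-defined bigraded object, yet the resulting shifts in both the homological and $q$-gradings must be compared with the shifts inherited from viewing the negative-marker face as sitting inside $\CalC(D)$. Verifying that these two grading conventions are reconciled by precisely $\{2+3\omega\}[1+\omega]$ (respectively $\{1+3\omega\}[\omega]$ in the other case) is the only nontrivial computation; the exactness itself is then a formal consequence of the snake lemma.
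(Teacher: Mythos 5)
Your argument is essentially the paper's own: it establishes the short exact sequences \eqref{eq:short-exact} and \eqref{eq:short-exact-orient} by noting that the negative-marker states form a subcomplex of the cube with the positively resolved diagram as quotient, reconciles the two grading conventions via $\omega$, and then passes to homology by the standard zig-zag lemma, which gives $in_*$, $p_*$ homogeneous and the connecting map of bidegree $(1,0)$. One cosmetic remark: ordering the crossings with the distinguished crossing \emph{last} makes every edge inside the negative-marker face pick up one extra negative marker in $\xi$, so the restricted differential is the \emph{negative} of the resolved diagram's Khovanov differential (putting that crossing first avoids this); since a global sign change of the differential gives an isomorphic complex, this does not affect the conclusion.
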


\begin{rem}
Long exact sequences~\eqref{eq:Kh-exact} work equally well over any ring $R$
and for every version of the Khovanov homology, including the odd one
(see~\cite{Khovanov-odd}). This is both a blessing and a curse. On one hand,
this means that all of the properties of the even Khovanov homology that are
proved using these long exact sequences (and most of them are) hold
automatically true for the odd Khovanov homology as well. 
On the other hand, this makes it very hard to find explanations to many
differences between these homology theories.
\end{rem}

\section{Applications of the Khovanov homology}\label{sec:Kh-appl}

In this section we collect some of the more prominent applications of the
Khovanov homology theories. This list is by no means complete and is chosen to
provide the reader with a broader view on the type of problems that can be
solved with a help of the Khovanov homology. We make a special effort to
compare the performance of different versions of the homology, where
applicable.

\subsection{Rasmussen invariant and bounds on the slice genus}
\label{sec:Rasmussen}
One of the most important applications of the Khovanov's construction so far
was obtained by Jacob Rasmussen in~2004. In~\cite{Jake-Milnor} he used the
structure of the Lee spectral sequence to define a new invariant of knots that
gives a lower bound on the slice genus. More specifically, for a knot $L$, its
Rasmussen invariant $s(L)$ is defined as the mean $q$-grading of the two copies
of $\Q$ that remain in the homological grading $0$ of the limiting term of the
Lee spectral sequence, see Theorem~\ref{thm:Jake-infinity}. Since the
$q$-gradings of these $\Q$'s are odd and are different by $2$, the Rasmussen
invariant is an even integer.

\begin{thm}[Rasmussen~\cite{Jake-Milnor}]\label{thm:Jake-s}
Let $L$ be a knot and $s(L)$ its Rasmussen invariant. Then
\begin{itemize}
\item $|s(L)|\le 2g_s(L)$, where $g_s(L)$ is the slice genus of $L$, that is, 
the smallest possible genus of a smoothly embedded surface in the $4$-ball
$D^4$ that has $L\subset S^3=\partial D^4$ as its boundary;
\item $s(L)=\sigma(L)$ for alternating $L$, where $\sigma(L)$ is the
signature of $L$;
\item $s(L)=2g_s(L)=2g(L)$ for a knot $L$ that possesses a planar diagram with
positive crossings only, where $g(L)$ is the genus of $L$;
\item if knots $L_-$ and $L_+$ have diagrams that are different at a single
crossing in such a way that this crossing is negative in $L_-$ and positive in
$L_+$, then $s(L_-)\le s(L_+)\le s(L_-)+2$.
\end{itemize}
\end{thm}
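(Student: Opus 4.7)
The plan is to derive all four assertions from one structural fact: the Lee-deformed Khovanov complex with differential $d+d'$ from~\eqref{eq:Lee-diff} is functorial under oriented link cobordisms. Concretely, an oriented cobordism $\Sigma\subset S^3\times[0,1]$ from $L_0$ to $L_1$, decomposed into Reidemeister moves and elementary Morse pieces (births, deaths, saddles), should induce a chain map $\phi_\Sigma$ on the total complex $(\CalC(\cdot;\Q),d+d')$ that is well-defined up to chain homotopy and filtered of degree $\chi(\Sigma)$ with respect to the $q$-grading. The verification is local: each of the TQFT maps $m$ and $\GD$ together with their Lee counterparts $m'$ and $\GD'$ must respect the $q$-filtration with the advertised shift, and the chain homotopies underlying Reidemeister invariance must be filtered as well. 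Combined with Theorems~\ref{thm:Lee} and~\ref{thm:Jake-infinity}, this produces for a knot $K$ two canonical \emph{orientation classes} $\mathfrak{s}_o,\mathfrak{s}_{\bar o}$ in the limit of the Lee spectral sequence whose filtered $q$-gradings are precisely $s(K)-1$ and $s(K)+1$; indeed, this is essentially the definition of $s(K)$ repackaged so that functoriality becomes applicable.

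With this machinery in place, \textbf{part~(1)} follows from realizing a smoothly embedded surface of genus $g_s(K)$ in $B^4$ bounding $K$ as a connected cobordism $\Sigma$ from $K$ to the unknot $U$ with $\chi(\Sigma)=-2g_s(K)$, obtained by removing a small $4$-ball meeting it in a disk. A local check on the elementary pieces shows $\phi_\Sigma(\mathfrak{s}_o^K)=\mathfrak{s}_o^U\ne 0$, whose filtered $q$-grading in $U$ is $-1$; the filtration shift then forces $(s(K)-1)+\chi(\Sigma)\le-1$, i.e. $s(K)\le 2g_s(K)$, and the reverse inequality comes from applying the same argument to the mirror together with $s(\overline K)=-s(K)$. \textbf{Part~(4)} uses that $L_+$ and $L_-$ can be joined through their common oriented resolution $L_0$ by a pair of elementary saddles, producing a genus-one cobordism with $\chi=-2$; functoriality yields $|s(L_+)-s(L_-)|\le 2$, and the sharper one-sided bound $s(L_-)\le s(L_+)$ is extracted by feeding the two-step saddle cobordism into the long exact sequence~\eqref{eq:Kh-exact} and tracking signs of the connecting differential.

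For \textbf{part~(3)}, a positive diagram of $L$ with $c$ crossings and $s$ Seifert circles yields a Seifert surface of genus $g(L)=(c-s+1)/2$, so part~(1) combined with $g_s(L)\le g(L)$ gives $s(L)\le 2g(L)$. For the matching lower bound, the oriented resolution sits in bigrading $(0,c)$ and contributes $A^{\otimes s}\{c\}$ to $\CalC^0(D;\Q)$; a direct computation shows that the chain-level representative of $\mathfrak{s}_o^L$ built from this state has minimal $q$-component equal to $c-s=2g(L)-1$, which cannot be raised by any $(d+d')$-coboundary, so the filtered $q$-grading of $\mathfrak{s}_o^L$ is exactly $2g(L)-1$ and $s(L)\ge 2g(L)$, collapsing all three quantities. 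Finally, \textbf{part~(2)} specializes to the thin case: by Theorem~\ref{thm:alt-thin}, $\CalH(L;\Q)$ for alternating $L$ is supported on the two diagonals $j-2i=\sigma(L)\pm1$, so Corollary~\ref{cor:lee-collapse} applies and the two surviving $\Q$'s of Theorem~\ref{thm:Lee} in homological grading $0$ must lie on these diagonals, forcing $s(L)=\sigma(L)$.

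The main technical obstacle is the functoriality assertion itself, and more precisely the claim that orientation classes transform to orientation classes under connected cobordisms with the correct filtration shift. Establishing this requires upgrading Reidemeister-invariance of the Khovanov complex to invariance of the deformed complex $(\CalC,d+d')$, and then verifying on each elementary Morse piece that Lee's orientation classes, constructed from the oriented resolution labeled by eigenvectors of the multiplication $X\mapsto X$, are preserved up to a nonzero scalar by $m,\GD,m',\GD'$ together with the unit and counit. Once this local calculation is completed, all four conclusions of Theorem~\ref{thm:Jake-s} follow from the filtration-tracking arguments outlined above together with Theorems~\ref{thm:Lee} and~\ref{thm:Jake-infinity}.
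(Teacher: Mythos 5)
This survey does not actually prove Theorem~\ref{thm:Jake-s}: it is stated as a citation to Rasmussen~\cite{Jake-Milnor} and used as a black box, so there is no internal argument to compare against. Measured against Rasmussen's original proof, your outline follows the canonical (and essentially only known) route: Lee's deformation, the orientation classes, and cobordism maps on the deformed complex that are filtered of degree $\chi(\Sigma)$ and carry orientation classes to orientation classes for connected cobordisms. You correctly identify that proving this filtered functoriality is where nearly all the work lies, and your treatments of the slice-genus bound and of positive diagrams are faithful to Rasmussen; for the latter you should make explicit the reason the filtration level of the orientation cycle cannot be improved, namely that a positive diagram has no generators in negative homological degree, so $\CalC^{-1}(D;\Q)=0$ and the cycle is the unique representative of its class.

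Two steps would not survive as written. For the one-sided refinement $s(L_-)\le s(L_+)$ in the last bullet, ``feeding the two-step saddle cobordism into the long exact sequence~\eqref{eq:Kh-exact} and tracking signs of the connecting differential'' is not a workable mechanism: that sequence lives on undeformed Khovanov homology and carries none of the Lee filtration data in which $s$ is defined, and the connecting map's signs are irrelevant to it. The asymmetry between $L_+$ and $L_-$ has to come from the writhe-dependent normalization $j(s)=\bigl(3w(D)-\Gs(s)\bigr)/2$: one tracks the orientation classes through the two saddle maps on Lee homology (through the oriented resolution, a two-component link, where Lee's generators are still available) and compares the $q$-shifts of the positive and negative diagrams, which is exactly what upgrades $|s(L_+)-s(L_-)|\le2$ to $s(L_-)\le s(L_+)\le s(L_-)+2$. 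Similarly, for alternating knots, thinness (Theorem~\ref{thm:alt-thin}) alone does not locate the two diagonals; you need Lee's computation~\cite{Lee} that the support is $j-2i=\sigma(L)\pm1$, a statement about the signature and not merely about width. Finally, your part~(1) also invokes $s(\overline{K})=-s(K)$, which rests on the duality of the deformed complex under mirror image and should be proved rather than assumed (or avoided by running the more careful argument with both orientation classes, as Rasmussen does).
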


\begin{cor}
$s(T_{p,q})=(p-1)(q-1)$ for $p,q>0$, where $T_{p,q}$ is the $(p,q)$-torus
knot. This implies the Milnor Conjecture, first proved by Kronheimer and Mrowka
in~1993 using the gauge theory~\cite{Kronheimer-Mrowka}. This conjecture
states that the slice genus (and, hence, the genus) of $T_{p,q}$ equals
$\frac12((p-1)(q-1))$. The upper bound on the slice genus is straightforward,
so the lower bound provided by the Rasmussen invariant is sharp.
\end{cor}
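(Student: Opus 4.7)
The plan is to apply Theorem \ref{thm:Jake-s} to the standard positive braid diagram of $T_{p,q}$, which simultaneously determines $s(T_{p,q})$ and the slice genus. For coprime $p,q>0$, the knot $T_{p,q}$ is the closure of the positive braid $(\sigma_1\sigma_2\cdots\sigma_{p-1})^q \in B_p$, giving a planar diagram $D$ whose $c = q(p-1)$ crossings are all positive and whose Seifert's algorithm produces $O_D = p$ disjoint circles.

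First I would extract an upper bound from Seifert's algorithm. The resulting oriented surface $\Sigma$ has one boundary component and Euler characteristic $\chi(\Sigma) = O_D - c = p - q(p-1)$, so $g(\Sigma) = (1-\chi(\Sigma))/2 = (p-1)(q-1)/2$. Hence $g_s(T_{p,q}) \le g(T_{p,q}) \le (p-1)(q-1)/2$, and by the third item of Theorem \ref{thm:Jake-s} also $s(T_{p,q}) \le (p-1)(q-1)$.

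Next I would establish the matching lower bound. Since $D$ has only positive crossings, the third item of Theorem \ref{thm:Jake-s} asserts $s(T_{p,q}) = 2g(T_{p,q})$, so it suffices to show that Seifert's bound is sharp. This is the technical core of Rasmussen's argument: one examines the canonical Lee cycle $\mathbf{s}_o \in \CalC(D;\Q)$ attached to the all-positive orientation $o$ of $D$ and computes that its $q$-grading on the $E_\infty$-page of the Lee spectral sequence is exactly $c - O_D + 1 = (p-1)(q-1)$, with its companion generator landing one unit away, forcing $s(T_{p,q}) \ge c - O_D + 1 = (p-1)(q-1)$. Combined with the upper bound this gives $s(T_{p,q}) = (p-1)(q-1)$. (Alternatively, one can quote the classical Alexander-polynomial lower bound $2g(T_{p,q}) \ge \deg \Delta_{T_{p,q}} = (p-1)(q-1)$ and feed it directly into the third item of Theorem \ref{thm:Jake-s}.)

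Finally, the Milnor conjecture falls out of the first item of Theorem \ref{thm:Jake-s}: $g_s(T_{p,q}) \ge |s(T_{p,q})|/2 = (p-1)(q-1)/2$, which together with the trivial upper bound $g_s(T_{p,q}) \le g(\Sigma) = (p-1)(q-1)/2$ yields $g_s(T_{p,q}) = (p-1)(q-1)/2$. The main obstacle in this approach is the sharp lower bound on $s(T_{p,q})$ for positive braid closures, i.e.\ the identification of the surviving Lee generators at $q$-grading $c - O_D + 1$; everything else is routine bookkeeping about Seifert's algorithm and the bullets of Theorem \ref{thm:Jake-s}.
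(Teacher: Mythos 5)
Your proposal is correct and follows essentially the route the paper intends: apply Theorem~\ref{thm:Jake-s} to the standard positive braid diagram of $T_{p,q}$, use the Seifert-surface genus $(p-1)(q-1)/2$ for the straightforward upper bound, and get the matching lower bound from the Rasmussen invariant (the paper leaves the value $s(T_{p,q})=(p-1)(q-1)$ to Rasmussen's positive-diagram formula $s=c-O+1$, exactly as you do, with your Alexander-polynomial alternative equally valid). The only quibble is cosmetic: the two surviving Lee generators differ by $2$ in $q$-grading (Theorem~\ref{thm:Jake-infinity}), so the companion sits two units, not one, from the canonical cycle's grading, with $s$ being their mean.
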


\begin{rem}
Although the Rasmussen invariant was originally defined for knots only, its
definition was later extended to the case of links by Anna Beliakova and
Stephan Wehrli~\cite{Beliakova-colored}.
\end{rem}

The Rasmussen invariant can be used to 
search for knots that are topologically locally-flatly slice but are not
smoothly slice (see~\cite{Sh-Rasmussen}). A knot is slice if its slice genus
is~$0$. Theorem~\ref{thm:Jake-s} implies that knots with non-trivial Rasmussen
invariant
are not smoothly slice. On the other hand, it was proved by
Freedman~\cite{Freedman} that knots with Alexander polynomial~1 are
topologically locally-flatly slice. There are 82 knots with up to 16 crossings
that possess these two properties~\cite{Sh-Rasmussen}. Each such knot gives
rise to a family of exotic $\R^4$~\cite[Exercise~9.4.23]{Gompf}. It is worth
noticing that most of these $82$ examples were not previously known.

The Rasmussen invariant was also used~\cite{Plamenevskaya,Sh-Rasmussen} to
deduce the combinatorial proof of the Slice-Bennequin Inequality. This
inequality states that
\begin{equation}\label{eq:SB-ineq}
g_s(\widehat{\beta})\le\frac12(w(\beta)-k+1),
\end{equation}
where $\beta$ is a braid on $k$ strands with the closure $\widehat{\beta}$ and
$w(\beta)$ is its writhe number.
The Slice-Bennequin Inequality provides one of the upper bounds for the
Thurston-Bennequin number of Legendrian links (see below). It was originally
proved by Lee Rudolph~\cite{Rudolph-quasipositivity} using the gauge theory.
The approach via the Rasmussen invariant and Khovanov homology avoids
gauge theory and symplectic Floer theory and results in a purely combinatorial
proof.

\begin{rem}
Since the Rasmussen's construction relies on the existence and convergence of
the Lee spectral sequence, the Rasmussen invariant can only be defined for the
even non-reduced Khovanov homology. In fact, a knot might not have any
rational homology in the homological grading $0$ of the odd Khovanov homology
at all, see Figure~\ref{fig:12n_475-hom}.
\end{rem}

\subsection{Bounds on the Thurston-Bennequin number}\label{sec:TB}
Another useful applications of the Khovanov homology is in finding
upper bounds on the Thurston-Bennequin number of Legendrian links.
Consider $\R^3$ equipped with the standard contact structure $dz-y\,dx$.
A link $K\subset\R^3$ is said to be {\em Legendrian} if it is everywhere
tangent to
the $2$-dimensional plane distribution defined as the kernel of this $1$-form.
Given a Legendrian link $K$, one defines its {\em Thurston--Bennequin number},
$tb(K)$, as the linking number of $K$ with its push-off $K'$ obtained using
a vector field that is tangent to the contact planes but orthogonal to the
tangent vector field of $K$. Roughly speaking, $tb(K)$ measures the framing of
the contact plane field around $K$. It is well-known that the TB-number can be
made arbitrarily small within the same class of topological links via
stabilization, but is bounded from above.

\begin{defin}\label{def:TB-bound}
For a given {\em topological} link $L$, let $\tbb(L)$, the {\em TB-bound}
of $L$, be the maximal possible TB-number among all the Legendrian
representatives of $L$. In other words, $\tbb(L)=\max_K\{tb(K)\}$, where
$K$ runs over all the Legendrian links in $\R^3$ that are topologically
isotopic to $L$.
\end{defin}

Finding TB-bounds for links attracts considerable interest lately, since they
can be used to demonstrate that certain contact structures on $\R^3$ are not
isomorphic to the standard one. Such bounds  can be obtained from the Bennequin
and Slice-Bennequin inequalities, degrees of HOMFLY-PT and Kauffman
polynomials, Knot Floer homology, and so on (see~\cite{Ng-TB-bound}
for more details). The TB-bound coming from the Kauffman polynomial is usually
one of the strongest, since most of the others incorporate another invariant
of Legendrian links, the rotation number, into the inequality.
In~\cite{Ng-TB-bound}, Lenhard Ng used Khovanov homology to define a new
bound on the TB-number.

\begin{thm}[Ng~\cite{Ng-TB-bound}]\label{thm:Ng-TB}
Let $L$ be an oriented link. Then
\begin{equation}\label{eq:Ng-TB}
\tbb(L)\le\min\Bigl\{k\big|\bigoplus_{j-i=k}\CalH^{i,j}(L;R)\not=0\Bigr\}.
\end{equation}
Moreover, this bound is sharp for alternating links.
\end{thm}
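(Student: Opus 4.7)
Let $K$ be any Legendrian representative of $L$; by maximizing over all such $K$ it suffices to show $tb(K) \le \min\{k : \bigoplus_{j-i=k}\CalH^{i,j}(L;R) \ne 0\}$, equivalently that $\CalH^{i,j}(L;R) = 0$ for every pair $(i,j)$ with $j - i < tb(K)$. Fix a front projection $D_F$ of $K$ and let $D$ be the oriented link diagram for $L$ obtained by replacing each right cusp of $D_F$ with a horizontal smoothing. This preserves crossings and their signs, so $w(D) = w(D_F)$, and the classical identity $tb(K) = w(D_F) - c_R(D_F)$ (with $c_R$ counting right cusps) becomes $tb(K) = w(D) - s(D)$, where $s(D)$ is the number of Seifert circles of $D$.

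I next examine the bigrading support of $\CalC(D)$. For every Kauffman state $s$, the formulas in~(\ref{eq:state-ij}) give
\[
j(s) - i(s) = \frac{3w(D)-\sigma(s)}{2} - \frac{w(D)-\sigma(s)}{2} = w(D),
\]
so the lowest value of $j - i$ supported by $\CalA(s) = A^{\otimes|D_s|}\{j(s)\}$ is exactly $w(D) - |D_s|$. The canonical oriented (Seifert) state $s_o$ satisfies $|D_{s_o}| = s(D)$, placing chain generators precisely on the diagonal $j - i = tb(K)$; however, other states $s$ with $|D_s| > s(D)$ do contribute chain elements on diagonals $j - i < tb(K)$, so the heart of the argument is to show that all such ``below-diagonal'' contributions must be boundaries.

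To establish this cancellation I induct on the number of crossings of $D$ using the long exact sequence of Theorem~\ref{thm:Kh-exact}. Selecting a crossing and letting $D_0, D_1$ be its two resolutions, each $D_j$ is itself obtainable from the front projection of a Legendrian link $K_j$ related to $K$ by the appropriate Legendrian resolution. A direct check shows that the grading shifts $\{1{+}3\Go\}[\Go]$ and $\{\pm 1\}$ in~(\ref{eq:Kh-exact}) match exactly the differences $tb(K) - tb(K_j)$, so the inductive hypothesis applied to $K_0$ and $K_1$ forces the two terms flanking $\CalH(L;R)$ in the long exact sequence to vanish on diagonals $j - i < tb(K)$; by exactness $\CalH^{i,j}(L;R) = 0$ there as well. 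The base case is a crossingless $D$, a disjoint union of circles whose Khovanov homology sits entirely on the single diagonal $j - i = -|D| = tb(K)$. The sharpness claim for alternating links then follows by combining Theorem~\ref{thm:alt-thin}, which pins down the shape of $\CalH(L;R)$ on its extremal diagonals, with the construction of an explicit Legendrian alternating representative whose $tb$ realizes~(\ref{eq:Ng-TB}).

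The main obstacle is the bookkeeping in the inductive step: one must verify that the Legendrian links $K_0, K_1$ corresponding to the two resolutions of a chosen crossing exist with well-defined Thurston--Bennequin numbers, and that the exponent $\Go$ and the grading shifts in~(\ref{eq:Kh-exact}) track exactly the $tb$-changes, so that the induction closes without losing sharpness.
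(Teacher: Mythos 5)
Your reduction rests on two claims that are both false, and the second one is fatal. First, the identification $c_R(D_F)=s(D)$ (right cusps of the front $=$ Seifert circles of the cusp-smoothed diagram), whence $tb(K)=w(D)-s(D)$: already for a once-stabilized Legendrian unknot the front has two right cusps and no crossings, its smoothing is an embedded circle with one Seifert circle, and $tb=-2\neq -1=w(D)-s(D)$. Second, and decisively, the statement you propose to prove by induction --- that $\CalH^{i,j}(L;R)=0$ whenever $j-i$ lies strictly below the Seifert-state diagonal $w(D)-s(D)$ --- is simply not true. Take the left-handed trefoil with its standard $3$-crossing diagram, which is exactly what you get by smoothing the cusps of its maximal-$tb$ front (three crossings, three right cusps, $tb=-6$): there $w(D)-s(D)=-3-2=-5$, yet $\CalH^{-3,-9}\simeq\Z$ sits on the diagonal $j-i=-6$. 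So the key intermediate vanishing is false, the ``direct check'' in your inductive step cannot come out right, and no choice of bookkeeping will close the induction as formulated: the correct threshold is $tb(K)=w(D)-c_R$, which genuinely depends on the front and is in general strictly below $w(D)-s(D)$, so it is not a function of the topological diagram $D$ alone and cannot be encoded as ``everything below the Seifert diagonal dies.'' This is precisely why Ng's argument (the proof the paper points to; the paper itself only cites \cite{Ng-TB-bound} and records that it rests on the long exact sequences~\eqref{eq:Kh-exact}) tracks a different, front-sensitive quantity through the skein exact triangle rather than the Seifert-circle count.

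Two smaller points. Your base case is also misstated: the Khovanov homology of a crossingless $k$-circle diagram is $A^{\otimes k}$ in homological degree $0$ and occupies all diagonals $-k,-k+2,\dots,k$, not the single diagonal $-k$ (only its minimum is $-k$), and a crossingless front has $tb=-c_R\le -k$ with equality only when there are no stabilizations --- consistent with the inequality you want, but not with the equalities you assert. Finally, sharpness for alternating links is asserted rather than proved: knowing that alternating links are H-thin (Theorem~\ref{thm:alt-thin}) locates the extremal diagonal, but the substance of Ng's sharpness claim is the explicit construction of a Legendrian representative from an alternating diagram whose $tb$ equals that diagonal, and that construction is exactly what is missing from your sketch.
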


This Khovanov bound on the TB-number is often better than those that were
known before. There are only two prime knots with up to 13 crossings for which
the Khovanov bound is worse than the one coming from the Kauffman
polynomial~\cite{Ng-TB-bound}. There are 45 such knots with at most 15
crossings.

\begin{example}
Figure~\ref{fig:TB-bounds} shows computations of the Khovanov TB-bound for the
$(4,-5)$-torus knot. The Khovanov homology groups in~\eqref{eq:Ng-TB} can be
used over any ring $R$, and this example shows that the bound coming from the
integral homology is sometimes better than the one from the rational one, due
to a strategically placed torsion. It is interesting to note that the integral
Khovanov bound of $-20$ is computed incorrectly in~\cite{Ng-TB-bound}. In
particular, this was one of the cases where Ng thought that the Kauffman
polynomial provides a better one. In fact, the TB-bound of $-20$ is sharp for
this knot.
\end{example}

\begin{figure}
\centerline{%
\vbox{\halign{\hfill#\hfill\cr\input{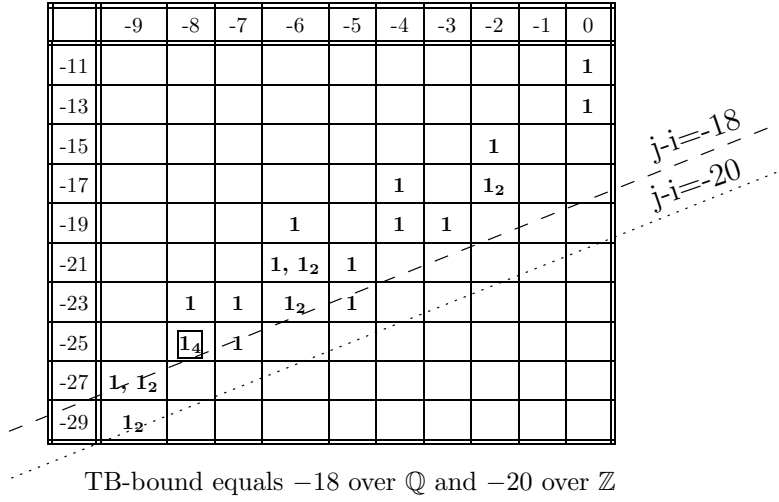}\cr
\noalign{\vskip -2\bigskipamount}
TB-bound equals $-18$ over $\Q$ and $-20$ over $\Z$\qquad\qquad\cr}}}
\caption{Khovanov TB-bound for the $(4,-5)$-torus knot}\label{fig:TB-bounds}
\end{figure}

The proof of Theorem~\ref{thm:Ng-TB} is based on the long exact
sequences~\eqref{eq:Kh-exact} and, hence, can be applied verbatim to the
reduced as well as odd Khovanov homology. By making appropriate adjustments to
the grading, we immediately get~\cite{Sh-OddKhovanov} that
\begin{align}
\tbb(L)\le&-1+\min\Bigl\{k\big|\bigoplus_{j-i=k}\tCalH^{i,j}(L;R)\not=0\Bigr\}\\
\tbb(L)\le&-1+\min\Bigl\{k\big|
\bigoplus_{j-i=k}\tCalH_{\odd}^{i,j}(L;R)\not=0\Bigr\}.
\end{align}

As it turns out, the odd Khovanov TB-bound is often better than the even one.
In fact, computations performed in~\cite{Sh-OddKhovanov} show that
the odd Khovanov homology provide the best upper bound on the TB-number
among all currently known ones for all prime knots with at most 15 crossings.
In particular, the odd Khovanov TB-bound equals the Kauffman one on all the
$45$ knots with at most $15$ crossings where the latter is better than the
even Khovanov TB-bound.

\begin{example}
Odd Khovanov TB-bound is better than the even one and equals to the Kauffman
one for the knot $12^n_{475}$, as shown in Figure~\ref{fig:12n_475-hom}.
\end{example}

\begin{figure}
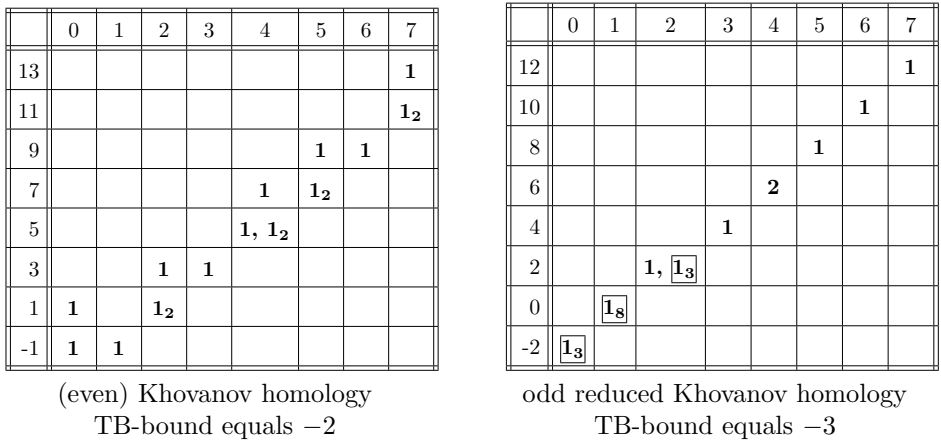

\centerline{%
\vbox{\halign{\hfill#\hfill\cr
\resizebox{0.47\hsize}{!}{\input{knot-12n_475-table}}\cr
(even) Khovanov homology\cr TB-bound equals $-2$\cr}}\qquad
\vbox{\halign{\hfill#\hfill\cr
\resizebox{0.47\hsize}{!}{\input{knot-12n_475-table-Odd}}\cr
odd reduced Khovanov homology\cr TB-bound equals $-3$\cr}}}
\caption{Khovanov TB-bounds for the knot $12^n_{475}$}\label{fig:12n_475-hom}
\end{figure}

\subsection{Finding quasi-alternating knots}\label{sec:quasi-alt}
Quasi-alternating links were introduced by Ozsv\'ath and Szab\'o
in~\cite{OS-spectral} as a way to generalize the class of alternating links.

\begin{defin}\label{def:QA-links} The class $\CalQ$ of quasi-alternating links
is the smallest set of links such that
\begin{itemize}
\item the unknot belongs to $\CalQ$;
\item if a link $L$ has a planar diagram $D$ such that the two
resolutions of this diagram at one crossing represent two links,
$L_0$ and $L_1$, with the properties that $L_0,L_1\in\CalQ$ and 
$\det(L)=\det(L_0)+\det(L_1)$, then $L\in\CalQ$ as well.
\end{itemize}
\end{defin}

\begin{rem}
It is well-known that all non-split alternating links are quasi-alternating.
\end{rem}

The main motivation for studying quasi-alternating links is the fact that
the double branched covers of $S^3$ along such links are so-called $L$-spaces.
A $3$-manifold $M$ is called an {\em $L$-space} if the order of its first
homology group $H_1$ is finite and equals the rank of the Heegaard--Floer
homology of $M$ (see~\cite{OS-spectral}). Unfortunately, due to the recursive
style of Definition~\ref{def:QA-links}, it is often highly non-trivial to
prove that a given link is quasi-alternating. It is equally challenging to
show that it is not.

To determine that a link is not quasi-alternating, one usually employs the fact
that such links have homologically thin Khovanov homology over $\Z$ and Knot
Floer homology over $\Z_2$ (see~\cite{QA-links}).
Thus, $\Z$H-thick knots are not quasi-alternating. There are 12 such knots
with up to 10 crossings. Most of the others can be shown to be
quasi-alternating by various constructions. After the work of Champanerkar and
Kofman~\cite{Kofman-Co-QA}, there were only two knots left, $9_{46}$ and
$10_{140}$, for which it was not known whether they are quasi-alternating or
not. Both of them have homologically thin Khovanov and Knot Floer homology.

As it turns out, odd Khovanov homology is much better at detecting
quasi-alternating knots. The proof of the fact that such knots are $\Z$H-thin
is based  on the long exact sequences~\eqref{eq:Kh-exact} and, therefore, can
be applied verbatim to the odd homology as well~\cite{Khovanov-odd}.
Computations show~\cite{Sh-OddKhovanov} that the knots $9_{46}$ and $10_{140}$
have
homologically thick odd Khovanov homology and, hence, are not
quasi-alternating, see Figures~\ref{fig:9_46-hom} and~\ref{fig:10_140-hom}.

\begin{figure}
\centerline{%
\vbox{\halign{\hfill#\hfill\cr
\resizebox{0.47\hsize}{!}{\input{pretzel-3-3-n3-table-Red}}\cr
(even) reduced Khovanov homology\cr}}\qquad
\vbox{\halign{\hfill#\hfill\cr
\resizebox{0.47\hsize}{!}{\input{pretzel-3-3-n3-table-Odd}}\cr
odd reduced Khovanov homology\cr}}}
\caption{Khovanov homology of $9_{46}$, the $(3,3,-3)$-pretzel
knot}\label{fig:9_46-hom}
\end{figure}

\begin{figure}
\centerline{%
\vbox{\halign{\hfill#\hfill\cr
\resizebox{0.47\hsize}{!}{\input{pretzel-3-4-n3-table-Red}}\cr
(even) reduced Khovanov homology\cr}}\qquad
\vbox{\halign{\hfill#\hfill\cr
\resizebox{0.47\hsize}{!}{\input{pretzel-3-4-n3-table-Odd}}\cr
odd reduced Khovanov homology\cr}}}
\caption{Khovanov homology of $10_{140}$, the $(3,4,-3)$-pretzel
knot}\label{fig:10_140-hom}
\end{figure}

\begin{rem}\label{rem:odd-torsion}
It is worth mentioning that the knots $9_{46}$ and $10_{140}$ are $(3,3,-3)$-
and $(3,4,-3)$-pretzel knots, respectively (see Figure~\ref{fig:pretzels} for
the definition). Computations show that $(n,n,-n)$- and $(n,n+1,-n)$-pretzel
links for $n\le 6$ all have torsion of order $n$ outside of the main diagonal
that supports the free part of the homology. This suggest a certain
$n$-fold symmetry on the odd Khovanov chain complexes for these
pretzel links that cannot be explained by the construction.
\end{rem}

\begin{figure}
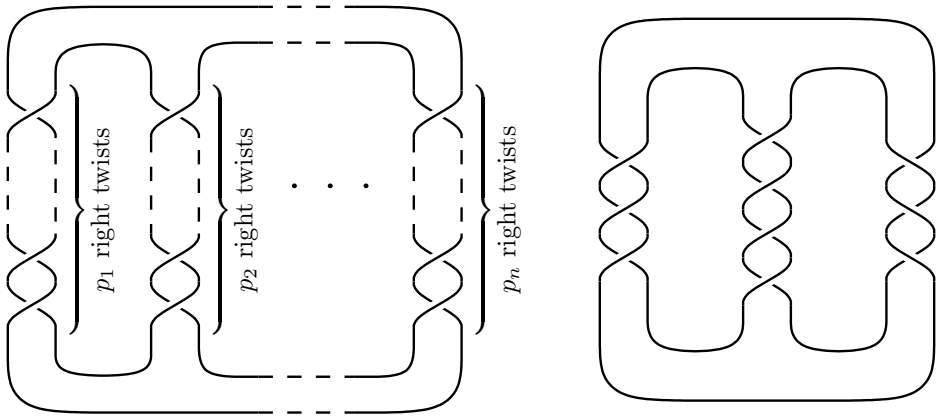

\centerline{$\vcenter{\hbox{\input{pretzel-link.pspdftex}}}\qquad\qquad
\vcenter{\hbox{\input{pretzel-3_4_n3.pspdftex}}}$}
\caption{$(p_1,p_2,\dots,p_n)$-pretzel link and $(3,4,-3)$-pretzel knot
$10_{140}$}\label{fig:pretzels}
\end{figure}

\begin{rem}
Joshua Greene has recently determined~\cite{Greene-QA} all quasi-alternating
pretzel links by considering $4$-manifolds that are bounded by the 
branched double covers of the links.
In particular, he found several knots that are not
quasi-alternating, yet both H-thin and OH-thin. The smallest such knot
is $11^n_{50}$. 
\end{rem}

\subsection{Detection of the unknot}
It was recently showed by Kronheimer and Mrowka~\cite{Kronheimer-Mrowka-unknot}
that Khovanov homology detects the unknot. More specifically, they proved that
a knot is the unknot if and only if its reduced Khovanov homology has rank $1$.
This development is a major step towards proving a long-standing conjecture
that the Jones polynomial itself detects the unknot.


\begin{thebibliography}{mmmm}
\raggedright

\bibitem[AKh]{Kh-Asaeda}
M.~Asaeda and M.~Khovanov, {\sl Notes on link homology}, in {\sl Low
   dimensional topology}, 139--195, IAS/Park City Math. Ser., 15, Amer. Math.
   Soc., Providence, RI, 2009; arXiv:0804.1279.

\bibitem[BN1]{BN-first}
D.~Bar-Natan, {\sl On Khovanov's categorification of the Jones polynomial},
   Alg. Geom. Top., {\bf 2} (2002) 337--370; arXiv:math.QA/0201043.

\bibitem[BN2]{BN-fast}
D.~Bar-Natan, {\sl Fast Khovanov Homology Computations}, J. Knot Th. and
    Ramif. {\bf 16} (2007), no. 3, 243--255; arXiv:math.GT/0606318.

\bibitem[BNG]{katlas-program}
D.~Bar-Natan and J.~Green, {\tt JavaKh} --- a fast program for computing
    Khovanov homology, part of the {\tt KnotTheory`} Mathematica Package,
    {\tt http://katlas.math.utoronto.ca/wiki/Khovanov\char"5F Homology}

\bibitem[BW]{Beliakova-colored}
A.~Beliakova and S.~Wehrli, {\sl Categorification of the colored Jones
    polynomial and Rasmussen invariant of links},  Canad. J. Math. {\bf 60}
    (2008), no. 6, 1240--1266; arXiv:math.GT/0510382.

\bibitem[B]{Bloom-mutation}
J.~Bloom, {\sl Odd Khovanov homology is mutation invariant}, Math. Res. Lett.
   {\bf 17} (2010), no. 1, 1--10; arXiv:0903.3746.

\bibitem[ChK]{Kofman-Co-QA}
A.~Champanerkar and I.~Kofman, {\sl Twisting quasi-alternating links},
   Proc. Amer. Math. Soc. {\bf 137} (2009), 2451--2458; arXiv:0712.2590.

\bibitem[DGShT]{DGShT}
N.~Dunfield, S.~Garoufalidis, A.~Shumakovitch, and M.~Thistlethwaite,
   {\sl Behavior of knot invariants under genus 2 mutation},
   New York J. Math. {\bf 16} (2010), 99--123; arXiv:math.GT/0607258.

\bibitem[F]{Freedman}
M.~Freedman, {\sl A surgery sequence in dimension four; the relations
    with knot concordance}, Invent. Math. {\bf 68} (1982), no. 2, 195-226.

\bibitem[GS]{Gompf}
R.~Gompf and A.~Stipsicz, {\sl $4$-manifolds and Kirby calculus}, Graduate
    Studies in Mathematics {\bf 20}, American Mathematical Society,
    Providence, RI, 1999.

\bibitem[Gr]{Greene-QA}
J.~Greene, {\sl Homologically thin, non-quasi-alternating links},
   Math. Res. Lett. {\bf 17} (2010), no. 1, 39--49; arXiv:0906.2222.

\bibitem[HTh]{Knotscape}
J.~Hoste and M.~Thistlethwaite, {\tt Knotscape} --- a program for
    studying knot theory and providing convenient access to tables of
    knots, {\tt http://www.math.utk.edu/\~{}morwen/knotscape.html}

\bibitem[J]{Jones}
V.~Jones, {\sl A polynomial invariant for knots via von Neumann algebras},
   Bull. Amer. Math. Soc. {\bf 12} (1985), 103--111.

\bibitem[K]{Kauffman-bracket}
L.~Kauffman, {\sl State models and the Jones polynomial}, Topology {\bf 26}
   (1987), no. 3, 395--407.

\bibitem[Kh1]{Kh-Jones}
M.~Khovanov, {\sl A categorification of the Jones polynomial},
    Duke Math. J. {\bf 101} (2000), no.~3, 359--426; arXiv:math.QA/9908171.

\bibitem[Kh2]{Kh-patterns}
M.~Khovanov, {\sl Patterns in knot cohomology I}, Experiment. Math.  {\bf 12}
    (2003), no. 3, 365--374; arXiv:math.QA/0201306.

\bibitem[Kh3]{Kh-colored}
M.~Khovanov, {\sl  Categorifications of the colored Jones polynomial},
    J. Knot Th. Ramif. {\bf 14} (2005), no. 1, 111--130;
    arXiv:math.QA/0302060.

\bibitem[Kh4]{Kh-sl3}
M.~Khovanov, {\sl $\slf(3)$ link homology I}, Algebr. Geom. Topol. {\bf 4}
    (2004), 1045--1081; arXiv:math.QA/0304375.

\bibitem[Kh5]{Kh-Frobenius}
M.~Khovanov, {\sl Link homology and Frobenius extensions},
   Fundamenta Mathematicae, {\bf 190} (2006), 179--190; arXiv:math.QA/0411447.

\bibitem[Kh6]{Kh-ICM}
M.~Khovanov, {\sl Link homology and categorification}, ICM--2006, Madrid,
   Vol. II, 989--999, Eur. Math. Soc., Z\"urich, 2006; arXiv:math/0605339.

\bibitem[KhR1]{KR-sln}
M.~Khovanov and L.~Rozansky, {\sl Matrix factorizations and link homology},
   Fund. Math. {\bf 199} (2008), no. 1, 1--91; arXiv:math.QA/0401268.

\bibitem[KhR2]{KR-HOMFLY}
M.~Khovanov and L.~Rozansky, {\sl Matrix factorizations and link homology II},
   Geom. Topol. {\bf 12} (2008), no. 3, 1387--1425; arXiv:math.QA/0505056.

\bibitem[KhR3]{KR-SO2N}
M.~Khovanov and L.~Rozansky, {\sl Virtual crossings, convolutions and a
   categorification of the $SO(2N)$ Kauffman polynomial}, J. G\"okova Geom.
   Topol. GGT {\bf 1} (2007), 116--214; arXiv:math/0701333.

\bibitem[KM1]{Kronheimer-Mrowka}
   P.~Kronheimer and T.~Mrowka, {\sl Gauge Theory for Embedded Surfaces I},
   Topology {\bf 32} (1993), 773--826.

\bibitem[KM2]{Kronheimer-Mrowka-unknot}
   P.~Kronheimer and T.~Mrowka, {\sl Khovanov homology is an unknot-detector},
   arXiv:1005.4346.

\bibitem[L]{Lee}
E.~S.~Lee, {\sl An endomorphism of the Khovanov invariant}, Adv. Math.
   {\bf 197} (2005), no. 2, 554--586; arXiv:math.GT/0210213.

\bibitem[MO]{QA-links}
C.~Manolescu, P.~Ozsv\'ath, {\sl On the Khovanov and knot Floer
   homologies of quasi-alternating links} in  Proc. of G\"okova
   Geometry-Topology Conference 2007, 60--81, G\"okova Geometry/Topology
   Conference (GGT), G\"okova, 2008; arXiv:0708.3249.

\bibitem[Ng]{Ng-TB-bound}
L.~Ng, {\sl A Legendrian Thurston-Bennequin bound from Khovanov homology},
   Algebr. Geom. Topol. {\bf 5} (2005) 1637--1653; arXiv:math.GT/0508649.

\bibitem[ORS]{Khovanov-odd}
P.~Ozsv\'ath, J.~Rasmussen, and Z.~Szab\'o, {\sl Odd Khovanov homology},
   arXiv:0710.4300.

\bibitem[OS1]{OS-HF}
P.~Ozsv\'ath and Z.~Szab\'o, {\sl Holomorphic disks and topological invariants
   for closed three-manifolds}, Ann. of Math. {\bf 159} (2004) 1027--1158;
   arXiv:math.SG/0101206.

\bibitem[OS2]{OS-knots}
P.~Ozsv\'ath and Z.~Szab\'o, {\sl Holomorphic disks and knot invariants},
   Adv. Math. {\bf 186} (2004), no. 1, 58--116;  arXiv:math.GT/0209056.

\bibitem[OS3]{OS-spectral}
P.~Ozsv\'ath and Z.~Szab\'o, {\sl On the Heegaard Floer homology of branched
   double-covers}, Adv. Math. {\bf 194} (2005), no. 1, 1--33;
   arXiv:math.GT/0309170.

\bibitem[P]{Plamenevskaya}
O. Plamenevskaya, {\sl Transverse knots and Khovanov homology},
    Math. Res. Lett. {\bf 13} (2006), no. 4, 571--586; arXiv:math.GT/0412184.

\bibitem[Ra1]{Jake-Floer}
J.~Rasmussen, {\sl Floer homology and knot complements}, Ph.D. Thesis,
   Harvard U.; arXiv:math.GT/0306378.

\bibitem[Ra2]{Jake-Milnor}
J.~Rasmussen, {\sl Khovanov homology and the slice genus}, to appear in
   Invent. Math.; arXiv:math.GT/0402131.

\bibitem[Ra3]{Jake-comparison}
J.~Rasmussen, {\sl Knot polynomials and knot homologies}, Geometry and
   Topology of Manifolds (Boden et al eds.), Fields Institute Communications
   {\bf 47} (2005) 261--280, AMS; arXiv:math.GT/0504045.

\bibitem[Ro]{Rolfsen-book}
D.~Rolfsen, {\sl Knots and Links}, Publish or Perish, Mathematics
    Lecture Series 7, Wilmington 1976.

\bibitem[Ru]{Rudolph-quasipositivity}
L.~Rudolph, {\sl Quasipositivity as an obstruction to sliceness},
    Bull Amer. Math. Soc. (N.S.) {\bf 29} (1993), no. 1, 51--59;
    arXiv:math.GT/9307233.

\bibitem[Sh1]{Sh-KhoHo}
A.~Shumakovitch, {\tt KhoHo} --- a program for computing and studying
    Khovanov homology, {\tt http://www.geometrie.ch/KhoHo}

\bibitem[Sh2]{Sh-torsion}
A.~Shumakovitch, {\sl Torsion of the Khovanov homology},
   arXiv:math.GT/0405474; to appear in Fund. Math.

\bibitem[Sh3]{Sh-Rasmussen}
A.~Shumakovitch, {\sl Rasmussen invariant, Slice-Bennequin inequality, and
   sliceness of knots}, J. Knot Th. and Ramif., {\bf 16} (2007), no. 10,
   1403--1412; arXiv:math.GT/0411643.

\bibitem[Sh4]{Sh-OddKhovanov}
A.~Shumakovitch, {\sl Patterns in odd Khovanov homology}, to appear in
   J. Knot Th. and Ramif.

\bibitem[T]{Turner-Char2}
P.~Turner, {\sl Calculating Bar-Natan's characteristic two Khovanov homology},
   J. Knot Th. Ramif. {\bf 15} (2006), no. 10, 1335--1356;
   arXiv:math.GT/0411225.

\bibitem[V]{Viro-defs}
O. Viro, {\sl Khovanov homology, its definitions and ramifications}, Fund.
    Math. {\bf 184} (2004), 317--342; arXiv:math.GT/0202199.

\bibitem[W1]{Wehrli-mutation1}
S.~Wehrli, {\sl Khovanov Homology and Conway Mutation},
   arXiv:math/0301312.

\bibitem[W2]{Wehrli-mutation2}
S.~Wehrli, {\sl Mutation invariance of Khovanov homology over $\mathbb{F}_2$},
    Quantum Topol. {\bf 1} (2010), no. 2, 111--128; arXiv:0904.3401.

\bibitem[Wu]{Wu-slN}
H.~Wu, {\sl A colored $\slf(N)$-homology for links in $S^3$}, arXiv:0907.0695.

\bibitem[Y]{Yonezawa}
Y.~Yonezawa, {\sl Quantum $(\slf_n,\land V_n)$ link invariant and matrix
   factorizations}, arXiv:0906.0220.

\end{thebibliography}
\end{document}